\DeclareMathOperator{\Tr}{Tr}
\DeclareMathOperator{\Kob}{Kob_0}
\renewcommand{\phi}[0]{\varphi}
\renewcommand{\theta}[0]{\vartheta}
\renewcommand{\epsilon}[0]{\varepsilon}
\newcommand{\N}{\text{$\mathbf{N}$}}
\newcommand{\Pro}{\text{$\mathbf{P}^1$}}
\newcommand{\F}{\text{$\mathbb{F}$}}
\newtheorem{theorem}{Theorem}[section]
\newtheorem{lemma}[theorem]{Lemma}
\newtheorem{corollary}[theorem]{Corollary}
\theoremstyle{definition}
\newtheorem{example}[theorem]{Example}
\theoremstyle{remark}
\newtheorem{remark}[theorem]{Remark}
\numberwithin{equation}{section}
\begin{document}
\title[Some notes on the multiplicative order of $\alpha + \alpha^{-1}$]{Some notes on the multiplicative order of $\alpha + \alpha^{-1}$ in finite fields of characteristic two}

\author{S.~Ugolini}
\copyrightinfo{2018}{Simone Ugolini}
\email{s.ugolini@unitn.it} 

\subjclass[2010]{11T30, 11G20, 37P55}

\keywords{Dickson polynomials, elliptic curves, finite fields, multiplicative order}

\begin{abstract}
In this paper we prove some results on the possible multiplicative orders of  $\alpha + \alpha^{-1}$ when $\alpha$ is a non-zero element of a finite field of characteristic 2. The results of the paper rely on a previous investigation on the structure of the graphs associated with the map $x \mapsto x + x^{-1}$ in finite fields of characteristic 2.
\end{abstract}

\maketitle
\bibliographystyle{amsplain}

\maketitle

\section{Introduction}
Elements of the form $\alpha + \alpha^{-1}$, where $\alpha$ belongs to the multiplicative group $\F_q^*$ of a finite field $\F_q$ with $q$ elements, play a crucial role in many contexts.

For example, the following nice property holds for the Dickson polynomial $D_n(x)$ of the first kind with parameter $1$ and degree $n$ (see \cite{LN} or the monograph \cite{LMT}): 
\begin{displaymath}
D_n (x+x^{-1}) = x^n + x^{-n}.
\end{displaymath}

Meyn \cite{mey} and Varshamov and Garakov \cite{var} employed the $Q$-transform, which takes a polynomial $f(x)$ to $f^Q (x) = x^{\deg(f)} \cdot f(x +x ^{-1})$, for the recursive synthesis of irreducible polynomials.

Shparlinski \cite{shpa} answered a question posed in \cite[Research Problem 3.1]{BGM} about the possibility of finding the multiplicative order $|\gamma + \gamma^{-1}|$ of $\gamma + \gamma^{-1}$ from the knowledge of the multiplicative order $|\gamma|$ of $\gamma \in \F_q^*$. He showed that the orders of $\gamma$ and $\gamma + \gamma^{-1}$ are independent in a certain sense, even though in finite fields of small characteristic it is not possible that  $|\gamma|$ and $|\gamma + \gamma^{-1}|$ are both small (see \cite[Section 4]{vGS}).

In this paper we  consider finite fields of characteristic $2$. If $\F_q$ is a finite field of characteristic $2$, then we define the map $\theta$  over the projective line $\Pro (\F_q) := \F_q \cup \{ \infty \}$ as follows:
\begin{displaymath}
\theta(x) = 
\begin{cases}
\infty & \text{if $x \in \{0, \infty \}$;}\\
x + x^{-1} & \text{otherwise.}
\end{cases}
\end{displaymath}

Such a map is strictly related to the duplication map over a Koblitz curve (see \cite{SU2}). In Section \ref{preliminaries} we review some properties of the graph $G_q$ associated with the map $\theta$ over $\Pro(\F_q)$. The reader can refer to \cite{SU2} for the proofs of the results. Very briefly, we recall that the vertices of $G_q$ are the elements of $\Pro (\F_q)$. Moreover, for any $\alpha \in \Pro(\F_q)$ there is an arrow which joins $\alpha$ to $\theta (\alpha)$. We say that $\alpha \in \Pro(\F_q)$ is $\theta$-periodic if $\theta^m (\alpha) = \alpha$ for some positive integer $m$ (here $\theta^m$ denotes the $m$-fold composition of $\theta$ with itself). If $\alpha$ is not $\alpha$-periodic, then it is pre-periodic, namely some iterate of $\alpha$ is $\theta$-periodic.  The resulting graph is formed by a finite number of connected components. Each component is formed by a cycle, whose vertices are roots of binary trees.  

Relying upon some preliminary results presented in Section \ref{preliminaries}, in Section \ref{distribution} we prove some restrictions on the multiplicative order of the iterates $\theta^i (\gamma)$ of an element $\gamma$ in the multiplicative group of a finite field of characteristic $2$. The main result of the section is Theorem \ref{cen_thm}. In Section   \ref{Sub_distributions} we describe three possible scenarios for the order and the absolute trace of the iterates $\theta^i (\gamma)$ of an element $\gamma \in \F_{q^4} \backslash \{0, 1 \}$ whose multiplicative order divides $q^2+1$. 
Finally, in Section \ref{Sub-dickson} we show how the results of the current paper and \cite{SU2} can be related to some results on the roots of certain Dickson polynomials presented in \cite{Blo}.

\section{Notation}
For the reader's convenience we list here some notations we use along the way.
\begin{itemize}
\item If $G$ is a (multiplicative) group and $g$ is an element of $G$, then $|g|$ is the order of $g$ in $G$, namely $|g|$ is the smallest positive integer $n$ such that $g^n = 1$.
\item We denote by $\N$ the set of natural numbers ($0$ included). Moreover we define $\N^*:= \N \backslash \{ 0 \}$ and $\N^{**} := \N \backslash \{0 ,1 \}$.
\item If $L$ is a finite field, then we denote by $L^* := L \backslash \{ 0 \}$ its multiplicative group and we define $L^{**} := L \backslash \{ 0, 1 \}$.
\item If $L_1$ and $L_2$ are two finite fields with $L_1 \subseteq L_2$, then we denote by  $[L_2 : L_1]$ the degree of the extension $L_2$ over $L_1$.
\item If $\alpha$ is an element of a finite field $L$ of characteristic $2$, then we denote by $\deg(\alpha)$ the degree of the field extension $\F_2 (\alpha)$ over $\F_2$. Equivalently, $\deg(\alpha)$ is the degree of the minimal polynomial of $\alpha$ over $\F_2$.
\item If $\alpha \in \F_{2^t}$ for some positive integer $t$, then 
\begin{displaymath}
\Tr_{t} (\alpha) := \sum_{i=0}^{t-1} {\alpha}^{2^i}
\end{displaymath}
is the absolute trace of $\alpha$. We recall that $\Tr_{t} (\alpha) \in \F_2$.
\item If $f$ is a function from a set $A$ to a set $B$, then $f(A) := \{f(a) : a \in A \}$ is the image set of $A$ in $B$. 
\end{itemize}
\section{Preliminaries}
\label{preliminaries}
Throughout this section $t$ denotes a positive integer, which can be written as 
\begin{equation*}
t = 2^r \cdot s
\end{equation*}
for some non-negative integer $r$ and some odd integer $s$.

We define $q:=2^t$. 

We denote by $A_t$ and $B_t$ the following subsets of $\Pro(\F_q)$: 
\begin{align*}
A_t & :=  \{ \alpha \in \F_{q}^* : \Tr_{t} (\alpha) = \Tr_{t} (\alpha^{-1}) \} \cup \{ 0, \infty \}; \\
B_t & :=  \{ \alpha \in \F_{q}^* : \Tr_{t} ( \alpha ) \not = \Tr_{t} (\alpha^{-1})  \}.
\end{align*} 

The set $A_t$ is strictly related to the ring $E(\F_q)$ of rational points in $\F_q$ of the elliptic curve $E$ defined over $\F_2$ by the equation
\begin{equation*}
y^2+xy=x^3+1.
\end{equation*}

In fact, the following holds (see \cite[Lemma 2.5]{SU2}).

\begin{lemma}\label{pre_1}
Let $x \in \F_{q}$. Then there exists $y \in \F_{q}$ such that $(x,y) \in E (\F_{q})$ if and only if $x = 0$ or $x \not = 0$ and $\Tr_{t}(x) = \Tr_{t}(x^{-1})$.
\end{lemma}
\begin{remark}
We notice that in \cite{SU2} the curve $E$ was denoted by $\Kob$ since $E$ is a Koblitz curve. In the present paper we opted for the shorter notation $E$.
\end{remark}

In \cite{SU2} we described the structure of the directed graph $G_q$ associated with the map $\theta$ over $\Pro(\F_q)$. The following theorem summarizes the results contained in \cite[Remark 2.3, Lemma 4.3, Lemma 4.4]{SU2}.

\begin{theorem}\label{thm_gq}
Let $d:=r+2$. Then the following hold.
\begin{enumerate}
\item The image set $\theta(A_t)$ is contained in $A_t$, while $\theta(B_t)$ is contained in $B_t$. In particular, either all the vertices of a connected component are contained in $A_t$, or all of them are contained in $B_t$.  
\item If $x \in A_t$ is $\theta$-periodic, then $x$ is the root of a binary tree of depth $d$.
\item If $x \in B_t$ is $\theta$-periodic, then $x$ is the root of a binary tree of depth $1$.
\item If $x \in A_t \backslash \{ \infty \}$ is $\theta$-periodic, then for any positive integer $k \leq d$ there are  $\lceil 2^{k-1} \rceil$ vertices at the level $k$ of the tree. Moreover, the root has one child, while all other vertices have two children. 
\item $\infty$ is $\theta$-periodic and for any positive $k \leq d$ there are  $\lceil 2^{k-2} \rceil$ vertices at the level $k$ of the tree. Moreover, $\infty$ and the vertex at the level $1$ have one child, while all other vertices have two children.
\end{enumerate}
\end{theorem}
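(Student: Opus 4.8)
The statement is quoted from \cite{SU2}, so the ``proof'' in the paper is presumably the reference; the sketch below is how I would prove it from scratch. The plan is to reduce everything to the fibre structure of $\theta$, dispatch the trees over $B_t$ by a direct trace computation, and route the trees over $A_t$ through the arithmetic of the curve $E$ (this is where Lemma \ref{pre_1} enters).

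\emph{Local picture and part (1).} For $\beta \in \F_q^*$ the set $\theta^{-1}(\beta)$ consists of the roots in $\F_q$ of $X^2+\beta X+1$; this polynomial is irreducible over $\F_q$ when $\Tr_t(\beta^{-1})=1$ and splits into $(X-\gamma)(X-\gamma^{-1})$ with $\gamma \neq \gamma^{-1}$ when $\Tr_t(\beta^{-1})=0$. Together with the degenerate fibres $\theta^{-1}(0)=\{1\}$ and $\theta^{-1}(\infty)=\{0,\infty\}$ this accounts for all incoming arrows. Two identities do the bookkeeping: $\gamma+\gamma^{-1}=\beta$ gives $\Tr_t(\gamma)+\Tr_t(\gamma^{-1})=\Tr_t(\beta)$, and $\Tr_t(\beta+\beta^{-1})=\Tr_t(\beta)+\Tr_t(\beta^{-1})$. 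Part (1) is then immediate: for $\alpha\in A_t\setminus\{0,\infty\}$ one has $\Tr_t(\theta(\alpha))=\Tr_t(\alpha)+\Tr_t(\alpha^{-1})=0$, and since $\alpha$ itself lies in the non-empty fibre of $\theta(\alpha)$ we also get $\Tr_t(\theta(\alpha)^{-1})=0$, whence $\theta(\alpha)\in A_t$; the argument for $B_t$ is identical with $0$ replaced by $1$. As $A_t$ and $B_t$ are disjoint and cover $\Pro(\F_q)$, each connected component lies entirely in one of them.

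\emph{Trees over $B_t$ (part (3)).} Every cycle vertex has a non-empty fibre, so a periodic $v\in B_t$ satisfies $\Tr_t(v^{-1})=0$ and hence $\Tr_t(v)=1$. If $x$ is periodic in $B_t$ its two preimages are its cycle predecessor $p$ (again periodic, so $\Tr_t(p)=1$) and $p^{-1}$; then $\Tr_t(p^{-1})=\Tr_t(x)-\Tr_t(p)=0$, so $\Tr_t((p^{-1})^{-1})=\Tr_t(p)=1$ and $p^{-1}$ has empty fibre. Thus the tree rooted at $x$ is just $x$ together with its unique non-cyclic child $p^{-1}$, a leaf: depth $1$.

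\emph{Trees over $A_t$ (parts (2), (4), (5)).} Here I would pass to $E$. Since $\theta(x)^2=x^2+x^{-2}$ is exactly the $x$-coordinate of $2P$ on $E$, writing $\sigma(u)=u^2$ we have $\delta:=\sigma\circ\theta=\bigl(P\mapsto x\text{-coordinate of }2P\bigr)$; as $\sigma$ and $\delta$ are defined over $\F_2$ they commute, giving $\theta^{-k}(v)=\sigma^{k}(\delta^{-k}(v))$ and hence, level by level, an isomorphism between the backward tree of $\theta$ at $v$ and the backward tree of $\delta$ at $v$. By Lemma \ref{pre_1} the elements of $A_t$ are precisely the $x$-coordinates of the points of $E(\F_q)$, a non-identity $x$-coordinate being hit exactly by the pair $\{P,-P\}$, so the $\delta$-graph on $A_t$ is the quotient by $[-1]$ of the multiplication-by-$2$ graph on $E(\F_q)$. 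Now $E$ is ordinary (its only $2$-torsion point is $(0,1)$), so the $2$-Sylow subgroup of $E(\F_q)$ is cyclic; with the Frobenius eigenvalues $\alpha,\bar\alpha$ satisfying $\alpha\bar\alpha=2$ and (a short $2$-adic check) $\bar\alpha\equiv1\pmod4$, lifting the exponent gives $v_2(\#E(\F_q))=v_2(1-\bar\alpha^{\,t})=v_2(\bar\alpha-1)+v_2(t)=2+r=d$. Decomposing $E(\F_q)\cong(\mathbf{Z}/2^{d}\mathbf{Z})\times C$ with $C$ of odd order, multiplication by $2$ acts coordinatewise, as $y\mapsto 2y$ on the cyclic $2$-part and as a bijection on $C$, with periodic set $\{0\}\times C$; the backward tree at $(0,c)$ has $2^{k-1}$ vertices of $2$-part order $2^k$ at level $k\ge1$ and total depth $d$. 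Passing to the $[-1]$-quotient: the root $\infty$ corresponds to $c=0$, where $(a,0)$ and $(-a,0)$ always coincide and the $2$-torsion point is $[-1]$-fixed, producing the narrowing $1,1,2,4,\dots$ of part (5); for $c\neq0$ the odd coordinate $c/2^k$ is never $[-1]$-fixed, so below the single non-cyclic child of the root no two vertices are identified and the counts $1,2,4,\dots$ of part (4) appear, the depth being $d$ in both cases. (Parts (2), (4), (5) can also be obtained by propagating traces through iterated fibres as in part (3), but then pinning the depth down to exactly $r+2$ needs a more delicate induction, which $E$ packages as the $2$-adic valuation above.)

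\emph{Main obstacle.} The crux is the last step: proving that the depth over $A_t$ equals $r+2$, i.e. the computation $v_2(\#E(\F_q))=r+2$ together with the cyclicity of the $2$-Sylow subgroup, and then the careful account of which $[-1]$-orbits in the backward tree collapse --- the $2$-torsion point, being $[-1]$-fixed, is exactly what makes the tree at $\infty$ thinner near the top than the other trees over $A_t$.
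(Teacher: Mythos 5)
The paper does not actually prove Theorem \ref{thm_gq}: it is stated as a summary of \cite[Remark 2.3, Lemma 4.3, Lemma 4.4]{SU2}, so there is no internal proof to compare yours against; your sketch follows the same strategy that the paper signposts (Lemma \ref{pre_1} together with the remark that $\theta$ is the duplication map on the Koblitz curve $E$ read through the squaring Frobenius), and it is sound. The two load-bearing steps both check out: the fibre criterion $\theta^{-1}(\beta)\neq\emptyset \Leftrightarrow \Tr_t(\beta^{-1})=0$ handles parts (1) and (3), and for the depth $r+2$ the Frobenius of $E$ satisfies $T^2+T+2$, so the unit root $\bar\alpha$ obeys $(\bar\alpha-1)(\bar\alpha+2)=-4$ with $\bar\alpha+2$ a $2$-adic unit, giving $v_2(\bar\alpha-1)=2$ and, by lifting the exponent, $v_2(\#E(\F_{2^t}))=2+v_2(t)=d$, consistent with $\#E(\F_2)=4$. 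Two phrasings deserve tightening. First, from $\theta^{-k}(v)=\delta^{-k}(\sigma^k(v))$ you get equal level cardinalities because $\sigma^k$ is an automorphism of the $\delta$-graph; you should also note that $\theta$-periodicity and $\delta$-periodicity coincide, so the trees (not just the fibres) correspond. Second, in the case $c\neq 0$ it is not that ``no two vertices are identified'': the $[-1]$-action matches the backward tree of $P$ bijectively with that of $-P$ (and fixes nothing there), so the quotient tree is isomorphic to either one of them --- that is precisely why the counts are $1,2,4,\dots$ at levels $1,2,3,\dots$ rather than $2,4,8,\dots$.
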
 

\begin{example}
In this example we construct the graph $G_{2^6}$. The labels are the exponents of the powers $\alpha^i$, where $\alpha$ is a root of the Conway polynomial $x^6+x^4+x^3+x+1 \in \F_2 [x]$, with $0 \leq i \leq 62$. Moreover there is one vertex labelled by $\infty$ and another by `0', namely the zero of $\F_{2^6}$.

We notice in passing that the trees of any connected component have the same depth, which is either 1 or 3.

\begin{center}
    \unitlength=2.5pt
    \begin{picture}(130, 50)(0,-10)
    \gasset{Nw=5,Nh=5,Nmr=2.5,curvedepth=0}
    \thinlines
    \footnotesize
  
    \node(A1)(0,30){41}
    \node(A2)(10,30){22}
    \node(A3)(20,30){50}
    \node(A4)(30,30){13}
    \node(A5)(40,30){19}
    \node(A6)(50,30){44}
    \node(A7)(60,30){26}
    \node(A8)(70,30){37}
    \node(A9)(80,30){38}
    \node(A10)(90,30){25}
    \node(A11)(100,30){52}
    \node(A12)(110,30){11}
    
    \node(A13)(120,30){21}
    \node(A14)(130,30){42}

    \node(B1)(5,20){7}
    \node(B2)(25,20){56}
    \node(B3)(45,20){14}
    \node(B4)(65,20){49}
    \node(B5)(85,20){28}
    \node(B6)(105,20){35}
    
    \node(B7)(125,20){0}
        
    \node(C1)(15,10){9}
    \node(C2)(55,10){18}
    \node(C3)(95,10){36}
    
    \node(C4)(125,10){`0'}
    
    \node(D1)(15,0){45}
    \node(D2)(55,0){27}
    \node(D3)(95,0){54}
    
    \node(D4)(125,0){$\infty$}

    \drawedge(A1,B1){}
    \drawedge(A2,B1){}
    \drawedge(A3,B2){}
    \drawedge(A4,B2){}
    \drawedge(A5,B3){}
    \drawedge(A6,B3){}
    \drawedge(A7,B4){}
    \drawedge(A8,B4){}
    \drawedge(A9,B5){}
    \drawedge(A10,B5){}
    \drawedge(A11,B6){}
    \drawedge(A12,B6){}
    \drawedge(A13,B7){}
    \drawedge(A14,B7){}
    \drawedge(B1,C1){}
    \drawedge(B2,C1){}
    \drawedge(B3,C2){}
    \drawedge(B4,C2){}
    \drawedge(B5,C3){}
    \drawedge(B6,C3){}
    \drawedge(B7,C4){}
    \drawedge(C1,D1){}
    \drawedge(C2,D2){}
    \drawedge(C3,D3){}
    \drawedge(C4,D4){}
    \drawedge(D1,D2){}
    \drawedge(D2,D3){}
    
    \drawloop[loopangle=-90](D4){}
    
    \gasset{curvedepth=10}
    \drawedge(D3,D1){}
\end{picture}
\end{center}

\begin{center}
    \unitlength=2.5pt
    \begin{picture}(90, 30)(0,-10)
    \gasset{Nw=5,Nh=5,Nmr=2.5,curvedepth=0}
    \thinlines
    \footnotesize
  
    \node(A1)(0,10){1}
    \node(A2)(10,10){15}
    \node(A3)(20,10){10}
    \node(A4)(30,10){16}
    \node(A5)(40,10){51}
    \node(A6)(50,10){34}
    \node(A7)(60,10){4}
    \node(A8)(70,10){60}
    \node(A9)(80,10){40} 
    
    \node(B1)(0,0){48}
    \node(B2)(10,0){53}
    \node(B3)(20,0){47}
    \node(B4)(30,0){12}
    \node(B5)(40,0){29}
    \node(B6)(50,0){59}
    \node(B7)(60,0){3}
    \node(B8)(70,0){23}
    \node(B9)(80,0){62}

    \drawedge(A1,B1){}
    \drawedge(A2,B2){}
    \drawedge(A3,B3){}
    \drawedge(A4,B4){}
    \drawedge(A5,B5){}
    \drawedge(A6,B6){}
    \drawedge(A7,B7){}
    \drawedge(A8,B8){}
    \drawedge(A9,B9){}
    
    \drawedge(B1,B2){}
    \drawedge(B2,B3){}
    \drawedge(B3,B4){}
    \drawedge(B4,B5){}
    \drawedge(B5,B6){}
    \drawedge(B6,B7){}
    \drawedge(B7,B8){}
    \drawedge(B8,B9){}
    
    \gasset{curvedepth=10}
    
    \drawedge(B9,B1){}
\end{picture}
\end{center}

\begin{center}
    \unitlength=2.5pt
    \begin{picture}(90, 30)(0,-10)
    \gasset{Nw=5,Nh=5,Nmr=2.5,curvedepth=0}
    \thinlines
    \footnotesize
  
    \node(A1)(0,10){2}
    \node(A2)(10,10){30}
    \node(A3)(20,10){20}
    \node(A4)(30,10){32}
    \node(A5)(40,10){39}
    \node(A6)(50,10){5}
    \node(A7)(60,10){8}
    \node(A8)(70,10){57}
    \node(A9)(80,10){17} 
    
    \node(B1)(0,0){33}
    \node(B2)(10,0){43}
    \node(B3)(20,0){31}
    \node(B4)(30,0){24}
    \node(B5)(40,0){58}
    \node(B6)(50,0){55}
    \node(B7)(60,0){6}
    \node(B8)(70,0){46}
    \node(B9)(80,0){61}

    \drawedge(A1,B1){}
    \drawedge(A2,B2){}
    \drawedge(A3,B3){}
    \drawedge(A4,B4){}
    \drawedge(A5,B5){}
    \drawedge(A6,B6){}
    \drawedge(A7,B7){}
    \drawedge(A8,B8){}
    \drawedge(A9,B9){}
    
    \drawedge(B1,B2){}
    \drawedge(B2,B3){}
    \drawedge(B3,B4){}
    \drawedge(B4,B5){}
    \drawedge(B5,B6){}
    \drawedge(B6,B7){}
    \drawedge(B7,B8){}
    \drawedge(B8,B9){}
    
    \gasset{curvedepth=10}
    
    \drawedge(B9,B1){}
\end{picture}
\end{center}
\end{example}

We recall some well-known facts we need in the rest of the paper. 

\begin{lemma}\label{elm_1}
The following hold:
\begin{itemize}
\item $\gcd(2^t-1, 2^t+1)= 1$;
\item $\gcd(2^t+1, 2^{2t}+1) =1$.
\end{itemize}
\end{lemma}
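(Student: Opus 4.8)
The plan is to use the standard fact that any common divisor of two integers divides every integer linear combination of them, combined with a parity argument.

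\textbf{First claim.} I would observe that any common divisor $e$ of $2^t-1$ and $2^t+1$ divides their difference $(2^t+1)-(2^t-1)=2$, so $e \in \{1,2\}$. Since $2^t-1$ (and $2^t+1$) is odd, $e$ cannot be $2$, hence $e=1$ and $\gcd(2^t-1,2^t+1)=1$.

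\textbf{Second claim.} Here I would use the factorization $2^{2t}-1=(2^t-1)(2^t+1)$ to rewrite $2^{2t}+1=(2^t-1)(2^t+1)+2$. Consequently any common divisor of $2^t+1$ and $2^{2t}+1$ divides $2$, and since $2^t+1$ is odd the gcd must be $1$. Equivalently, reducing modulo $2^t+1$ and using $2^t\equiv-1\pmod{2^t+1}$ gives $2^{2t}+1\equiv(-1)^2+1=2\pmod{2^t+1}$, so $\gcd(2^t+1,2^{2t}+1)\mid 2$, and oddness finishes it.

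There is no real obstacle in this lemma; it is a routine computation. The only point requiring a word of justification is the appeal to the oddness of $2^t\pm 1$ to exclude the common factor $2$, which is immediate since $t\ge 1$.
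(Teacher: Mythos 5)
Your proof is correct. The first claim is argued exactly as in the paper: the gcd divides the difference $2$, and parity rules out $2$. For the second claim you take a slightly different (and somewhat more direct) route. The paper subtracts to get $(2^{2t}+1)-(2^t+1)=2^t(2^t-1)$, uses oddness to conclude that the gcd divides $2^t-1$, and then invokes the first bullet to finish; your argument instead writes $2^{2t}+1=(2^t-1)(2^t+1)+2$ (equivalently, reduces modulo $2^t+1$ using $2^t\equiv -1$), so the gcd divides $2$ outright and parity finishes immediately, with no appeal to the first claim. Both are valid one-line divisibility arguments; yours makes the second bullet self-contained, while the paper's version leans on the first bullet it has just established. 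Nothing is missing in either case.
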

\begin{proof}
If $d = \gcd(2^t-1, 2^t+1)$, then $d \mid ((2^t+1)-(2^t-1)) = 2$. Since $d$ cannot be equal to $2$, we conclude that $d=1$.

If $e = \gcd(2^t+1, 2^{2t}+1)$, then $e \mid ((2^{2t}+1)-(2^t+1)) = 2^t (2^t-1)$. Since $e$ cannot be even, we have that $e \mid (2^t-1)$. Hence $e \mid \gcd(2^t-1, 2^t+1) = 1$. Therefore $e=1$.
\end{proof}

\begin{lemma}\label{elm_4}
Let $g_1$ and $g_2$ be two elements of a finite commutative group $G$. If $\gcd(|g_1|, |g_2|) = 1$, then $|g_1 g_2| = |g_1| \cdot |g_2|$.
\end{lemma}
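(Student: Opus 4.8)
Write $n_1 := |g_1|$, $n_2 := |g_2|$ and $n := |g_1 g_2|$, so that by hypothesis $\gcd(n_1, n_2) = 1$. The plan is to show $n \mid n_1 n_2$ and $n_1 n_2 \mid n$ separately, and then conclude by antisymmetry of divisibility.

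For the first divisibility, I would compute $(g_1 g_2)^{n_1 n_2}$ directly. Using commutativity of $G$ we may rearrange factors, obtaining $(g_1 g_2)^{n_1 n_2} = g_1^{n_1 n_2} g_2^{n_1 n_2} = (g_1^{n_1})^{n_2} (g_2^{n_2})^{n_1} = 1 \cdot 1 = 1$; hence $n \mid n_1 n_2$ by the defining property of the order.

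For the reverse divisibility, I would start from $(g_1 g_2)^{n} = 1$, which (again by commutativity) gives $g_1^{n} = g_2^{-n}$. Raising both sides to the power $n_2$ yields $g_1^{n n_2} = (g_2^{n_2})^{-n} = 1$, so $n_1 \mid n n_2$; since $\gcd(n_1, n_2) = 1$, this forces $n_1 \mid n$. The symmetric argument (raising to the power $n_1$ instead) gives $n_2 \mid n$. Because $n_1$ and $n_2$ are coprime, $n_1 \mid n$ and $n_2 \mid n$ together imply $n_1 n_2 \mid n$.

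Combining the two divisibilities gives $n = n_1 n_2$, as claimed. There is no real obstacle here: the only points requiring a modicum of care are the two uses of commutativity (to split and reorder powers of a product) and the use of $\gcd(n_1,n_2)=1$ \emph{in both directions} — once to pass from $n_1 \mid n n_2$ to $n_1 \mid n$, and once to pass from $\{n_1 \mid n,\ n_2 \mid n\}$ to $n_1 n_2 \mid n$.
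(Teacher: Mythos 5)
Your proof is correct: both divisibility directions are argued properly, commutativity is invoked exactly where it is needed, and the coprimality hypothesis is used correctly both to pass from $n_1 \mid n n_2$ to $n_1 \mid n$ and to combine $n_1 \mid n$, $n_2 \mid n$ into $n_1 n_2 \mid n$. Note that the paper itself gives no proof of this lemma --- it is merely recalled as a well-known fact --- so there is nothing to compare against; your argument is the standard complete one.
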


\begin{lemma}\label{elm_2}
The group $\F_q^*$ is cyclic of order $2^t-1$. In particular, if $\alpha \in \F_q^{ ** }$ then $|\alpha| \nmid (2^t+1)$.
\end{lemma}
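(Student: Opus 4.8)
The statement to prove is Lemma~\ref{elm_2}: that $\F_q^*$ is cyclic of order $2^t-1$, and consequently that if $\alpha \in \F_q^{**}$ then $|\alpha| \nmid (2^t+1)$. The first assertion is the classical fact that the multiplicative group of any finite field is cyclic, and I would simply invoke it (or cite a standard reference such as \cite{LN}); the order is $2^t - 1$ because $\F_q = \F_{2^t}$ has $2^t$ elements and we remove $0$. So the only thing requiring argument is the ``in particular'' clause.

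For that clause, the plan is to argue by contradiction using the coprimality from Lemma~\ref{elm_1}. Suppose $\alpha \in \F_q^{**}$, so $\alpha \neq 0$ and $\alpha \neq 1$, hence $|\alpha| \geq 2$. Since $\alpha \in \F_q^*$ and $\F_q^*$ is cyclic of order $2^t - 1$, Lagrange's theorem gives $|\alpha| \mid (2^t - 1)$. Now if we also had $|\alpha| \mid (2^t+1)$, then $|\alpha|$ would divide $\gcd(2^t - 1, 2^t + 1)$, which equals $1$ by the first item of Lemma~\ref{elm_1}. This forces $|\alpha| = 1$, i.e. $\alpha = 1$, contradicting $\alpha \in \F_q^{**}$. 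Hence $|\alpha| \nmid (2^t+1)$.

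There is no real obstacle here: the result is elementary and the two ingredients (cyclicity of $\F_q^*$ and $\gcd(2^t-1,2^t+1)=1$) are both already available. The only point to be slightly careful about is that the conclusion ``$|\alpha| \nmid (2^t+1)$'' genuinely needs $\alpha \neq 1$ — if $\alpha = 1$ then $|\alpha| = 1$ divides everything — which is exactly why the hypothesis is $\alpha \in \F_q^{**}$ rather than merely $\alpha \in \F_q^*$. I would write the proof in three or four lines along the lines above.
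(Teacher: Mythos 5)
Your proof is correct and follows essentially the same route as the paper: cite the standard cyclicity result for $\F_q^*$, then combine $|\alpha|>1$, $|\alpha|\mid(2^t-1)$, and $\gcd(2^t-1,2^t+1)=1$ from Lemma~\ref{elm_1} to rule out $|\alpha|\mid(2^t+1)$. You merely spell out the contradiction a bit more explicitly than the paper does.
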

\begin{proof}
For the first part of the claim see for example \cite[Theorem 2.8]{LN}.
For the second part of the claim, we notice that $|\alpha| > 1$. Since $|\alpha| \mid (2^t-1)$, according to Lemma \ref{elm_1} we have that $|\alpha| \nmid (2^t+1)$. 
\end{proof}

We recall that $q + 1 = (2^s)^{2^r}+1$ is a generalized Fermat number. The following holds for its divisors.

\begin{lemma}\label{elm_5}
Let $d$ be a positive divisor of $q + 1$. Then $d \equiv 1 \bmod{2^{r+1}}$.
\end{lemma}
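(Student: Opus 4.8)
The plan is to reduce the statement to the case of prime divisors and then analyze the multiplicative order of $2$ modulo such a prime. First I would observe that $q+1 = 2^t+1$ is odd, so every positive divisor $d$ of $q+1$ is a product of odd primes (with multiplicities). Since the set of integers congruent to $1$ modulo $2^{r+1}$ is closed under multiplication, it therefore suffices to show that every prime $p$ dividing $q+1$ satisfies $p \equiv 1 \bmod 2^{r+1}$; the congruence for an arbitrary $d$ then follows by multiplying the congruences for its prime factors, and the case $d=1$ is trivial.

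So let $p$ be a prime with $p \mid 2^t+1$. Then $2^t \equiv -1 \bmod p$, hence $2^{2t} \equiv 1 \bmod p$, while $2^t \not\equiv 1 \bmod p$ because $p$ is odd and so $-1 \not\equiv 1 \bmod p$. Let $e$ be the multiplicative order of $2$ modulo $p$. Then $e \mid 2t$ but $e \nmid t$. Writing $t = 2^r s$ with $s$ odd, we have $2t = 2^{r+1}s$, so, writing $e = 2^a m$ with $m$ odd, the condition $e \mid 2^{r+1}s$ gives $a \le r+1$ and $m \mid s$, while $e \nmid 2^r s$ rules out $a \le r$; hence $a = r+1$, that is, $2^{r+1} \mid e$.

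Finally, by Fermat's little theorem $e \mid p-1$, and therefore $2^{r+1} \mid p-1$, i.e.\ $p \equiv 1 \bmod 2^{r+1}$. Combining this with the reduction step of the first paragraph completes the proof. The degenerate case $r=0$ is covered automatically: there the statement only asserts that $d$ is odd, which is immediate since $q+1$ is odd.

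The argument is entirely elementary and I do not expect a genuine obstacle. The only point that requires a little care is the step pinning down the exact power of $2$ dividing $e$ from the two divisibility conditions $e \mid 2^{r+1}s$ and $e \nmid 2^{r}s$; this is a routine observation about $2$-adic valuations, and one could alternatively phrase it through a lifting-the-exponent style computation applied to $2^{2t}-1$.
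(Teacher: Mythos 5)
Your proof is correct. The overall structure matches the paper's: both reduce to the case of a prime divisor $p$ of $q+1$ and then observe that the set of residues congruent to $1$ modulo $2^{r+1}$ is closed under multiplication. The difference is in how the prime case is handled: the paper simply cites a reference for the classical fact that every prime divisor of the generalized Fermat number $(2^s)^{2^r}+1$ has the form $k\cdot 2^{r+1}+1$, whereas you prove it from scratch by computing the multiplicative order $e$ of $2$ modulo $p$ (from $2^t\equiv-1$ and $2^{2t}\equiv 1$ you correctly pin down $v_2(e)=r+1$, and Fermat's little theorem gives $2^{r+1}\mid e\mid p-1$). Your version is therefore self-contained where the paper's is not; this buys independence from the external reference at the cost of a short extra argument, and every step of that argument checks out.
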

\begin{proof}
Let $p$ be a prime which divides $(2^s)^{2^{r}} + 1$. According to \cite{DK}, $p = k \cdot 2^{r+1} + 1$ for some $k \in \N^*$, namely 
\begin{equation}\label{p_cong}
p \equiv 1 \bmod{2^{r+1}}.
\end{equation}
Since $q + 1$ can be written as a product of primes satisfying (\ref{p_cong}), we get the result.
\end{proof}

\begin{lemma}\label{elm_3}
If $\alpha$ has degree $t$ over $\F_2$ and $\beta := \alpha + \alpha^{-1}$, then either $\deg(\beta) = t$ or $\deg(\beta) = \frac{t}{2}$ (provided that $t$ is even).
\end{lemma}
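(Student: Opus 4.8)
The plan is to work with the tower of fields $\F_2 \subseteq \F_2(\beta) \subseteq \F_2(\alpha)$ and to bound the relative degree $[\F_2(\alpha) : \F_2(\beta)]$, since everything will follow from the fact that this degree is at most $2$.

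First I would note that, since $\beta = \alpha + \alpha^{-1} \in \F_2(\alpha)$, we have the inclusion $\F_2(\beta) \subseteq \F_2(\alpha)$; hence, writing $m := \deg(\beta)$, the subfield $\F_2(\beta) = \F_{2^m}$ is contained in $\F_2(\alpha) = \F_{2^t}$, which forces $m \mid t$.

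The key step is to exhibit a quadratic relation for $\alpha$ over $\F_2(\beta)$. From $\alpha + \alpha^{-1} = \beta$ and $\alpha \neq 0$ (which is implicit, since $\alpha^{-1}$ occurs) we obtain, in characteristic $2$, the identity $\alpha^2 + \beta\alpha + 1 = 0$. Thus $\alpha$ is a root of $f(x) := x^2 + \beta x + 1 \in \F_2(\beta)[x]$, a polynomial of degree $2$, so $[\F_2(\alpha) : \F_2(\beta)] \leq 2$.

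Finally I would combine the two facts via the multiplicativity of degrees in a tower: $t = [\F_2(\alpha) : \F_2(\beta)] \cdot [\F_2(\beta) : \F_2] = [\F_2(\alpha) : \F_2(\beta)] \cdot m$, so $t/m \in \{1, 2\}$. If $t/m = 1$ then $\deg(\beta) = t$; if $t/m = 2$ then $\deg(\beta) = t/2$, and this case can of course occur only when $t$ is even. I do not expect a genuine obstacle here, as the argument is a routine degree computation; the only points worth a remark are that $\alpha \neq 0$ (so that the quadratic relation is meaningful) and that $f(x)$ need not be irreducible over $\F_2(\beta)$, which is precisely why the relative degree may drop to $1$, i.e.\ why the alternative $\deg(\beta) = t$ genuinely arises.
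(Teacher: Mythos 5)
Your argument is correct and is essentially the paper's own proof: both exhibit $\alpha$ as a root of $x^2+\beta x+1$ over $L:=\F_2(\beta)$ and conclude via $\deg(\beta)=[L:\F_2]=t/[\F_2(\alpha):L]$ with $[\F_2(\alpha):L]\in\{1,2\}$. Your added remarks on $\alpha\neq 0$ and on the possible reducibility of the quadratic are accurate but not needed beyond what the paper states.
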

\begin{proof}
Let $L := \F_2(\beta)$. Then $\alpha$ is a root of the polynomial $p(x) := x^2 + \beta x + 1 \in L [x]$. The result follows because 
\begin{equation*}
\deg(\beta) = [L:\F_2] = \frac{t}{[\F_2(\alpha):L]}   
\end{equation*}
with $1 \leq [\F_2(\alpha):L] \leq 2$.
\end{proof}

The following lemma (see \cite[Lemma 4.1]{LW}) and Lemma \ref{lem_lac} will be used repeatedly in the paper. 
\begin{lemma}\label{lem_lac_0}
Let $\alpha$ be an element of order $2^t-1$ in $\F_{2^t}^*$ and $\beta$ an element of order $2^t+1$ in $\F_{2^{2t}}^*$.
Let $\Omega$ and $\overline{\Omega}$ be two subsets of $\F_{2^t}$ defined as 
\begin{align*}
\Omega & = \{x \in \F_{2^t}^* : \Tr_t (x^{-1}) = 0 \};\\
\overline{\Omega} & = \{x \in \F_{2^t}^* : \Tr_t (x^{-1}) = 1 \}.
\end{align*}
Then
\begin{align*}
\Omega & = \{x \in \F_{2^t}^* : x = \theta(\alpha^i) \text{ with $i \in \N$, $1 \leq i \leq 2^{t-1}-1$ } \};\\
\overline{\Omega} & = \{x \in \F_{2^t}^* :  x = \theta(\beta^i) \text{ with $i \in \N$, $1 \leq i \leq 2^{t-1}$ }  \}.
\end{align*} 
\end{lemma}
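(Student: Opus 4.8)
The plan is to show that, in each case, the proposed set of $\theta$-values is a subset of the trace class on the right of exactly the required size, and then conclude by a cardinality count. Two classical ingredients are needed. First, the solvability criterion for quadratics in characteristic two: for $c \in \F_{2^t}^*$ the polynomial $x^2 + cx + 1 \in \F_{2^t}[x]$ has a root in $\F_{2^t}$ if and only if $\Tr_t(c^{-2}) = 0$, and since the Frobenius fixes the trace this is equivalent to $\Tr_t(c^{-1}) = 0$. Second, the elementary observation that if $y$ is a nonzero element of some extension of $\F_2$ with $\theta(y) = c \neq 0$, then $y$ is a root of $x^2 + cx + 1$, whose two roots are $y$ and $y^{-1}$.

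First I would treat $\Omega$. For $1 \leq i \leq 2^{t-1}-1$ write $c_i := \theta(\alpha^i) = \alpha^i + \alpha^{-i} \in \F_{2^t}$. One checks $c_i \neq 0$: otherwise $\alpha^{2i} = 1$, forcing $(2^t-1) \mid i$, impossible in this range. Since $\alpha^i \in \F_{2^t}^*$ is a root of $x^2 + c_i x + 1$, the criterion gives $\Tr_t(c_i^{-1}) = 0$, so $c_i \in \Omega$. The $c_i$ are pairwise distinct: $c_i = c_j$ forces $\alpha^i = \alpha^{\pm j}$, hence $i \equiv \pm j \pmod{2^t-1}$, and for $i,j \in \{1,\dots,2^{t-1}-1\}$ the relation $i+j \equiv 0$ is impossible (as $2 \leq i+j \leq 2^t-2$), so $i=j$. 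Thus $\{\theta(\alpha^i) : 1 \leq i \leq 2^{t-1}-1\}$ consists of $2^{t-1}-1$ distinct elements of $\Omega$; on the other hand, the substitution $z=x^{-1}$ shows $|\Omega|$ equals the number of nonzero $z \in \F_{2^t}$ with $\Tr_t(z)=0$, which is $2^{t-1}-1$. Comparing cardinalities yields the first equality.

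For $\overline{\Omega}$ the argument is parallel, with one extra point. Since $|\beta| = 2^t+1$ we have $\beta^{2^t} = \beta^{-1}$, so $\theta(\beta^i) = \beta^i + (\beta^i)^{2^t}$ is the relative trace of $\beta^i$ from $\F_{2^{2t}}$ to $\F_{2^t}$; in particular $\bar c_i := \theta(\beta^i) \in \F_{2^t}$, and $\bar c_i \neq 0$ for $1 \leq i \leq 2^{t-1}$ since otherwise $(2^t+1) \mid i$. Again $\beta^i$ is a root of $x^2 + \bar c_i x + 1$, but now $\beta^i \notin \F_{2^t}$: by Lemma \ref{elm_1} we have $\gcd(2^t-1, 2^t+1) = 1$, so an element of $\F_{2^{2t}}^*$ whose order divides $2^t+1$ lies in $\F_{2^t}$ only if it equals $1$, excluded for $1 \leq i \leq 2^{t-1}$. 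Hence $x^2 + \bar c_i x + 1$ has no root in $\F_{2^t}$, and the criterion forces $\Tr_t(\bar c_i^{-1}) = 1$, i.e. $\bar c_i \in \overline{\Omega}$. Distinctness of the $\bar c_i$ follows as before from congruences modulo $2^t+1$, giving $2^{t-1}$ distinct elements of $\overline{\Omega}$; since $|\overline{\Omega}|$ equals the number of $z \in \F_{2^t}$ with $\Tr_t(z) = 1$, namely $2^{t-1}$, the second equality follows.

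The only genuinely delicate points are bookkeeping: checking that the index ranges are chosen so that $i \mapsto \theta(\alpha^i)$ and $i \mapsto \theta(\beta^i)$ are injective and never produce $0$, and — the crucial step — using $\gcd(2^t-1,2^t+1)=1$ to guarantee $\beta^i \notin \F_{2^t}$, which is precisely what flips the trace from $0$ to $1$ in the second case. The quadratic solvability criterion and the counting of trace values are entirely standard.
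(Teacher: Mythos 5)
Your proof is correct. Note that the paper itself gives no proof of this lemma: it is imported verbatim from the literature (cited as Lemma 4.1 of the reference \cite{LW}), so there is nothing internal to compare against. Your argument is a clean, self-contained derivation: the two key ingredients — the Artin--Schreier-type criterion that $x^2+cx+1$ splits over $\F_{2^t}$ iff $\Tr_t(c^{-1})=0$, and the observation that the two roots of that quadratic are $y$ and $y^{-1}$ whenever $\theta(y)=c$ — correctly place $\theta(\alpha^i)$ in $\Omega$ and $\theta(\beta^i)$ in $\overline{\Omega}$ (the latter because $\gcd(2^t-1,2^t+1)=1$ forces $\beta^i\notin\F_{2^t}$ for $i$ in the stated range, which is exactly the step that flips the trace value). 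The injectivity checks via $j\equiv\pm i$ modulo $2^t-1$ and $2^t+1$ respectively, together with $|\Omega|=2^{t-1}-1$ and $|\overline{\Omega}|=2^{t-1}$, close the counting argument. This is essentially the standard proof one would find in the cited source, and all the index-range bookkeeping is handled correctly.
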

\begin{remark}
Let 
\begin{align*}
I & := \{i \in \N : 1 \leq i \leq 2^{t-1} \};\\
J & := \{j \in \N: 2^{t-1} + 1 \leq j \leq 2^t \}.
\end{align*}
If $\beta$ is defined as in Lemma \ref{lem_lac_0}, then $\beta^k + \beta^{-k} \in \F_{2^t}^*$ for any $k \in I \cup J$. 

Indeed, the assertion is true if $k \in I$, according to Lemma \ref{lem_lac}. 

Let $k \in J$. We notice that
\begin{displaymath}
\beta^{-k} = \beta^{2^t+1-k}.
\end{displaymath}

Let $i := 2^t+1-k$. Then 
\begin{displaymath}
1 \leq i \leq 2^{t-1}.
\end{displaymath}
Hence $\beta^k + \beta^{-k} = \beta^i + \beta^{-i} \in \overline{\Omega} \subseteq \F_{2^t}^*$.
\end{remark}

\begin{remark}
We notice that $\beta^{2t+1} = 1$. Hence $\beta^{2t+1} + \beta^{-(2t+1)} = 0 \not \in \F_{2^t}^*$. 
\end{remark}

From Lemma \ref{lem_lac_0} and the subsequent remarks we deduce the following results, where $\overline{\Omega}$ is defined as in Lemma \ref{lem_lac_0}.
\begin{lemma}\label{lem_lac}
We have that $\overline{\Omega} = \{\theta(\gamma): \gamma \in \F_{2^{2t}}^{**}, |\gamma| \mid  (2^t+1) \}$. 
\end{lemma}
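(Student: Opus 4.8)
The plan is to combine the cyclic structure of $\F_{2^{2t}}^*$ with Lemma \ref{lem_lac_0}. First I would observe that $\F_{2^{2t}}^*$ is cyclic of order $2^{2t}-1 = (2^t-1)(2^t+1)$, so it has a unique subgroup of order $2^t+1$, namely the set of its elements whose order divides $2^t+1$. Since the element $\beta$ of Lemma \ref{lem_lac_0} has order exactly $2^t+1$, that subgroup is $\langle\beta\rangle = \{\beta^i : 0 \le i \le 2^t\}$. Discarding $\beta^0 = 1$ (which is excluded by the requirement $\gamma \in \F_{2^{2t}}^{**}$), one gets
\begin{displaymath}
\{\gamma \in \F_{2^{2t}}^{**} : |\gamma| \mid (2^t+1)\} = \{\beta^i : 1 \le i \le 2^t\}.
\end{displaymath}

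Next I would compute the image of this set under $\theta$. Each $\beta^i$ is a nonzero element of $\F_{2^{2t}}$, hence distinct from $0$ and $\infty$, so $\theta(\beta^i) = \beta^i + \beta^{-i}$. Because $\beta^{2^t+1} = 1$ we have $\beta^{-i} = \beta^{2^t+1-i}$, whence $\theta(\beta^i) = \theta(\beta^{2^t+1-i})$ for every $i$. As $i$ ranges over $\{1,\dots,2^{t-1}\}$ the index $2^t+1-i$ ranges over $\{2^{t-1}+1,\dots,2^t\}$, and these two ranges are disjoint and cover $\{1,\dots,2^t\}$; therefore
\begin{displaymath}
\{\theta(\gamma) : \gamma \in \F_{2^{2t}}^{**},\ |\gamma| \mid (2^t+1)\} = \{\theta(\beta^i) : 1 \le i \le 2^t\} = \{\theta(\beta^i) : 1 \le i \le 2^{t-1}\}.
\end{displaymath}
The remarks preceding the lemma already ensure that each $\theta(\beta^i)$ with $1 \le i \le 2^{t-1}$ lies in $\F_{2^t}^*$ (equivalently $\beta^{2i}\ne 1$, i.e. $(2^t+1)\nmid i$, which is clear in this range), so this set is contained in $\F_{2^t}^*$.

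Finally, Lemma \ref{lem_lac_0} identifies $\{\theta(\beta^i) : 1 \le i \le 2^{t-1}\}$ with $\overline{\Omega}$, and the claim follows. I expect no genuine obstacle here: the only points that need a line of justification are that $\langle\beta\rangle$ is exactly the set of elements of $\F_{2^{2t}}^*$ of order dividing $2^t+1$, and that $\theta(\beta^i)$ never vanishes for $1 \le i \le 2^t$ — both immediate, the latter being part of the quoted remarks.
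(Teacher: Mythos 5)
Your proof is correct and is precisely the deduction the paper intends: it reduces the left-hand set to $\{\beta^i : 1 \le i \le 2^t\}$, folds the range in half via $\beta^{-i}=\beta^{2^t+1-i}$ as in the remarks following Lemma \ref{lem_lac_0}, and then invokes that lemma to identify the image with $\overline{\Omega}$. The paper states this result as an immediate consequence of Lemma \ref{lem_lac_0} and those remarks, so your write-up simply makes that same argument explicit.
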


\begin{corollary}\label{cor_lac}
Let $x \in \F_q^*$ be a leaf of a connected component of $G_q$.
\begin{itemize}
\item If $x \in A_t$, then $\Tr_t(x) = \Tr_t(x^{-1}) = 1$.
\item If $x \in B_t$, then $\Tr_t(x) = 0$ and $\Tr_t(x^{-1}) = 1$. 
\end{itemize}
\end{corollary}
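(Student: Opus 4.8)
The plan is to unwind the definition of a \emph{leaf}. In $G_q$ every vertex has out-degree $1$, so a leaf is exactly a vertex of in-degree $0$, i.e.\ an element with no preimage under $\theta$ in $\Pro(\F_q)$. (This reading is consistent with Theorem~\ref{thm_gq}: every $\theta$-periodic vertex is the root of a tree of depth $\ge 1$, hence has children, so no cycle vertex is a leaf and the leaves are precisely the non-periodic vertices with empty preimage.) For $x \in \F_q^*$ a preimage in $\Pro(\F_q)$ is the same thing as a root in $\F_q$ of $T^2 + xT + 1$, because $\theta(\gamma) = x$ forces $\gamma \notin \{0,\infty\}$ and $\gamma + \gamma^{-1} = x$. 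Thus the whole statement reduces to a single claim: if $T^2 + xT + 1$ has no root in $\F_q$, then $\Tr_t(x^{-1}) = 1$; once this is known, both bullet points are immediate from the definitions of $A_t$ and $B_t$.

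First I would prove that key claim, $\Tr_t(x^{-1}) = 1$. The quickest route is the Artin--Schreier criterion: the substitution $T = xU$ turns $T^2 + xT + 1 = 0$ into $U^2 + U = x^{-2}$, which is solvable in $\F_q$ iff $\Tr_t(x^{-2}) = 0$, and $\Tr_t(x^{-2}) = \Tr_t(x^{-1})$ since the Frobenius $y \mapsto y^2$ fixes the absolute trace. To keep the argument inside the paper's framework, one may instead use Lemma~\ref{lem_lac_0}: if $\Tr_t(x^{-1}) = 0$ then $x \in \Omega$, and the lemma exhibits $x$ as $\theta(\alpha^i)$ for a generator $\alpha$ of $\F_q^*$, so $x$ would have a preimage in $\F_q^*$ — contradicting that $x$ is a leaf. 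Equivalently, via Lemma~\ref{lem_lac}: since $x$ is a leaf, $T^2 + xT + 1$ is irreducible over $\F_q$, so its two roots lie in $\F_{q^2}\setminus\F_q$, are Frobenius conjugate, and have product $1$; hence each root $\gamma$ satisfies $\gamma^q = \gamma^{-1}$, so $\gamma^{q+1} = 1$ and $|\gamma| \mid (2^t+1)$, while $\gamma \notin \F_q$ gives $\gamma \in \F_{q^2}^{**}$. Then Lemma~\ref{lem_lac} yields $x = \gamma + \gamma^{-1} = \theta(\gamma) \in \overline{\Omega}$, i.e.\ $\Tr_t(x^{-1}) = 1$.

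With $\Tr_t(x^{-1}) = 1$ established, each case is a one-liner: if $x \in A_t$ then by definition $\Tr_t(x) = \Tr_t(x^{-1})$, whence $\Tr_t(x) = \Tr_t(x^{-1}) = 1$; if $x \in B_t$ then $\Tr_t(x) \neq \Tr_t(x^{-1}) = 1$, forcing $\Tr_t(x) = 0$ and $\Tr_t(x^{-1}) = 1$. I do not expect a genuine obstacle here: the only point that needs care is the correct interpretation of ``leaf'' as a vertex of in-degree $0$ together with the observation that $x \in \F_q^*$ excludes the degenerate preimages $0,\infty$ (which map to $\infty \neq x$ anyway); after that the argument is routine.
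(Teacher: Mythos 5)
Your proposal is correct and, in its core route via Lemma \ref{lem_lac_0} (a leaf has no preimage in $\F_q^*$, hence $x \notin \Omega$, hence $\Tr_t(x^{-1})=1$, and the two bullets then follow from the definitions of $A_t$ and $B_t$), it is essentially identical to the paper's own proof; the Artin--Schreier computation you sketch is just a self-contained substitute for citing that lemma. No gaps.
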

\begin{proof}
We notice that $x \in \Omega \cup \overline{\Omega}$. Since $x$ is a leaf, there is no $\tilde{x} \in \F_q^*$ such that $x = \theta (\tilde{x})$. Therefore $x \in\overline{\Omega}$.
\end{proof}

The following property holds for the leaves belonging to $A_t$.

\begin{lemma}\label{iter_trace}
Let $\alpha \in \F_q^* \cap A_t$ be a leaf of $G_q$ with $\Tr_t (\alpha) = \Tr_t (\alpha^{-1}) =1$. 

If $\theta(\gamma) = \alpha$ for some $\gamma \in \F_{q^2}^*$, then $\Tr_{2t} (\gamma) = \Tr_{2t} (\gamma^{-1}) = 1$.
\end{lemma}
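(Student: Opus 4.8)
The plan is to use the fact that a leaf has no $\theta$-preimage inside $\F_q^*$ in order to force $\gamma$ to be quadratic over $\F_q$, and then to compute the absolute trace $\Tr_{2t}$ by transitivity through $\F_q$.

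First I would rewrite the hypothesis $\theta(\gamma) = \alpha$ as $\gamma + \gamma^{-1} = \alpha$, so that $\gamma$ is a root of $f(x) := x^2 + \alpha x + 1 \in \F_q[x]$; observe that both roots of $f$ are nonzero, since the constant term is $1$. Because $\alpha$ is a leaf of $G_q$, there is no $\tilde{x} \in \F_q^*$ with $\theta(\tilde{x}) = \alpha$ (this is the same remark used in the proof of Corollary \ref{cor_lac}); moreover a preimage of $\alpha \in \F_q^*$ under $\theta$ cannot be $0$ or $\infty$, because $\theta(0) = \theta(\infty) = \infty \neq \alpha$. Hence $f$ has no root in $\F_q$, and being of degree $2$ it is irreducible over $\F_q$. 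Consequently $\gamma \in \F_{q^2} \setminus \F_q$ and $\F_q(\gamma) = \F_{q^2}$.

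Next I would identify the $\F_q$-conjugate of $\gamma$. The two roots of $f$ are $\gamma$ and $\gamma^q$, and Vieta's formulas give $\gamma \cdot \gamma^q = 1$ and $\gamma + \gamma^q = \alpha$; hence $\gamma^q = \gamma^{-1}$. Using the transitivity of the trace (first from $\F_{q^2}$ down to $\F_q$, sending $\gamma$ to $\gamma + \gamma^q$, then from $\F_q$ down to $\F_2$), one gets
\begin{displaymath}
\Tr_{2t}(\gamma) = \Tr_t(\gamma + \gamma^q) = \Tr_t(\alpha) = 1 .
\end{displaymath}
Similarly $\gamma^{-q} = (\gamma^q)^{-1} = \gamma$, so $\Tr_{2t}(\gamma^{-1}) = \Tr_t(\gamma^{-1} + \gamma^{-q}) = \Tr_t(\gamma^{-1} + \gamma) = \Tr_t(\alpha) = 1$, which completes the argument.

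I do not expect a genuine obstacle here: the only point requiring a little care is the passage from \emph{$\alpha$ is a leaf} to \emph{$f$ is irreducible over $\F_q$}, which relies on checking that any $\theta$-preimage of a nonzero field element must itself lie in $\F_q^*$, so that the absence of a leaf-preimage is exactly the absence of an $\F_q$-root of $f$.
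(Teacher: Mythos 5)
Your proof is correct, but it takes a genuinely different route from the paper's. The paper argues structurally: it observes that $\gamma$ must sit at level $r+3$ of a tree in $G_{q^2}$, invokes Theorem \ref{thm_gq} to conclude that such a tree has depth $r+3>1$ and hence lies in $A_{2t}$, so that $\gamma$ is itself a leaf of $G_{q^2}$ contained in $A_{2t}$, and then the trace values follow from Corollary \ref{cor_lac}. You instead argue algebraically: the leaf hypothesis forces $x^2+\alpha x+1$ to be irreducible over $\F_q$, so $\gamma^q=\gamma^{-1}$, and transitivity of the trace gives $\Tr_{2t}(\gamma)=\Tr_t(\gamma+\gamma^q)=\Tr_t(\alpha)=1$ and likewise for $\gamma^{-1}$. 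Every step checks out (the reduction from ``no $\theta$-preimage in $\Pro(\F_q)$'' to ``no root of $f$ in $\F_q$'' is handled correctly, including the exclusion of $0$ and $\infty$). Your argument is more elementary and self-contained --- it bypasses Theorem \ref{thm_gq}, Corollary \ref{cor_lac} and the external Lemma \ref{lem_lac_0} entirely, and it even shows that the hypothesis $\Tr_t(\alpha^{-1})=1$ is not needed, only $\Tr_t(\alpha)=1$ together with the leaf property. What the paper's approach buys in exchange is the extra structural conclusion that $\gamma$ is a leaf of $G_{q^2}$ lying in $A_{2t}$, which is the kind of information reused in the iterated arguments of Section \ref{distribution}; your computation yields the trace values directly but not that graph-theoretic placement.
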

\begin{proof}
First we notice that $q^2 = 2^{2t} = 2^{2^{r+1} s}$.
Therefore $\gamma$ belongs to the level $r+3$ of a tree of $G_{q^2}$. According to Theorem \ref{thm_gq} the trees in $G_{q^2}$ have depth either $1$ or $r+3$ and the former holds only for trees whose vertices belong to $B_{2t}$. Since $r+3 > 1$ we conclude that $\gamma$ is a leaf which belongs to $A_{2t}$. 
\end{proof}
  
\section{Distribution of the orders in $G_q$, $G_{q^2}$, $G_{q^4}$ and some implications}\label{distribution}
Let $n := 2^l \cdot m$ for some non-negative integer $l$ and some odd integer $m$.

Let $q:=2^n$.

First we prove some results on the degrees of $\alpha$ and its iterates $\theta^i (\alpha)$ for an element $\alpha$ belonging to a finite field of characteristic $2$.

\begin{lemma}\label{lem_leaf_0}
Let $\alpha$ be a leaf of $G_{2^t}$, where $t = 2^r \cdot s$ for some non-negative integer $r$ and some odd integer $s$. 

Then $\alpha \not \in \{0, \infty \}$ and $\deg(\alpha) = 2^r \cdot v$ for some odd integer $v$ such that $v \mid s$. 
\end{lemma}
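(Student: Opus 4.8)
The plan is to reduce the degree statement to a single parity fact about the absolute trace, supplied by Corollary \ref{cor_lac}. First I would check that $\alpha \notin \{0,\infty\}$: since $\theta(1) = 1 + 1^{-1} = 0$ and $\theta(0) = \infty$, both $0$ and $\infty$ have a $\theta$-preimage in $\Pro(\F_q)$ (indeed $\infty$ is $\theta$-periodic by Theorem \ref{thm_gq}), so neither is a leaf of $G_{2^t}$. Hence $\alpha \in \F_q^*$ with $q = 2^t$, and Corollary \ref{cor_lac} applies to it; whether $\alpha$ lies in $A_t$ or in $B_t$, the corollary gives in particular $\Tr_t(\alpha^{-1}) = 1$. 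This one bit of information is the only thing the rest of the argument will use.

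Next I would pin down the shape of $\deg(\alpha)$. From $\alpha \in \F_{2^t}$ we get $\deg(\alpha) \mid t = 2^r s$, and since $\gcd(2^r,s) = 1$ we may write $\deg(\alpha) = 2^a v$ with $0 \le a \le r$ and $v$ a divisor of $s$ (hence odd); the claim is exactly that $a = r$. Suppose instead $a < r$ and put $t' := \deg(\alpha)$, so that $c := [\F_{2^t}:\F_{2^{t'}}] = t/t' = 2^{r-a}(s/v)$ is even. Because $\alpha^{-1} \in \F_{2^{t'}}$, transitivity of the absolute trace gives $\Tr_t(\alpha^{-1}) = \Tr_{t'}\big(c \cdot \alpha^{-1}\big)$, and $c \cdot \alpha^{-1} = 0$ since $c$ is even and the field has characteristic $2$; thus $\Tr_t(\alpha^{-1}) = 0$, contradicting the previous paragraph. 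Therefore $a = r$ and $\deg(\alpha) = 2^r v$ with $v \mid s$ odd, which is the assertion.

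I don't expect a genuine obstacle here: the passage from $\alpha^{-1} \in \F_{2^{t'}}$ with $[\F_{2^t}:\F_{2^{t'}}]$ even to $\Tr_t(\alpha^{-1}) = 0$ is the standard behaviour of the absolute trace under even-degree extensions, and the divisor decomposition of $t$ is immediate. The one point that deserves care is confirming that Corollary \ref{cor_lac} really does yield $\Tr_t(\alpha^{-1}) = 1$ in \emph{both} of its cases — it does — since that is the single fact without which the proof cannot proceed.
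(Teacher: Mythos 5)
Your proof is correct, but it takes a different route from the paper's. The paper rules out $\deg(\alpha) = 2^u v$ with $u < r$ by looking \emph{upward} in the graph: any $\gamma$ with $\theta(\gamma) = \alpha$ satisfies $\gamma^2 + \alpha\gamma + 1 = 0$ over $\F_2(\alpha)$, hence lies in an extension of degree at most $2\cdot 2^u v \mid 2^r s = t$, so $\gamma$ would be a vertex of $G_{2^t}$ and $\alpha$ could not be a leaf. You instead extract the single bit $\Tr_t(\alpha^{-1}) = 1$ from Corollary \ref{cor_lac} and contradict it with the transitivity of the absolute trace, which kills $\Tr_t$ on any proper subfield of even index; the computation $\Tr_t(\alpha^{-1}) = \Tr_{t'}\bigl([\F_{2^t}:\F_{2^{t'}}]\cdot\alpha^{-1}\bigr) = 0$ is right, and you correctly note that both cases of the corollary supply the needed bit, so there is no gap. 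The paper's argument is the more elementary and self-contained of the two (only the quadratic $x^2+\alpha x+1$ is needed, with no appeal to Lemma \ref{lem_lac_0}, which is imported from the literature); yours buys a slightly different insight, namely that ``leaf'' is equivalent to a trace condition and that this condition is incompatible with living in a subfield of even index --- which is really the same phenomenon as the paper's, seen through the trace characterization of the image of $\theta$ rather than through explicit preimages.
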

\begin{proof}
First we notice that $\theta(0) = \infty$ and $\theta(1) = 0$. Hence $\alpha \in \F_{2^t}^*$.

Let $\deg(\alpha) = 2^u \cdot v$ for some non-negative integer $u$ and some odd integer $v$. Since $\deg(\alpha) \mid t$, we have that $u \leq r$ and $v \mid s$. Indeed, $u = r$. In fact, if  $u < r$ and $\gamma$ is an element in the algebraic closure $\overline{\F}_2$ of $\F_2$ such that $\theta(\gamma) = \alpha$, then $\gamma \in G_{2^t}$, in contradiction with the fact that $\alpha$ is a leaf of $G_{2^t}$.
\end{proof}

\begin{lemma}\label{lem_leaf_1}
Let $t := 2^r \cdot s$ and $\tilde{t} := 2^{r-1} \cdot s$ for some positive integer $r$ and some odd integer $s$.
Let $\alpha \in A_t \cap \F_{2^t}^*$ and $d:=\deg(\alpha)$.

If $\alpha$ is a leaf of $G_{2^t}$ and $\deg(\theta (\alpha)) = d$, then one of the following holds:
\begin{enumerate}
\item $\deg(\theta^i(\alpha)) = d$ for all $i \in \N$.
\item $\theta^i(\alpha) \in B_{\tilde{t}} \cap \F_{2^{\tilde{t}}}^*$ for any $i \geq r+1$, while $\deg(\theta^i(\alpha)) = d$ for $1 \leq i \leq r$.
\end{enumerate}
\end{lemma}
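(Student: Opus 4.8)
The setup is that $\alpha \in A_t \cap \F_{2^t}^*$ is a leaf of $G_{2^t}$ with $\deg(\alpha) = d$ and, crucially, $\deg(\theta(\alpha)) = d$ as well (so that Lemma~\ref{elm_3} does \emph{not} halve the degree at the first step). Write $d = 2^r \cdot v$ with $v \mid s$ odd, as forced by Lemma~\ref{lem_leaf_0}. My plan is to track the $2$-adic valuation of $\deg(\theta^i(\alpha))$ step by step and to show that the only way it can drop is by the mechanism in Lemma~\ref{elm_3} (degree halved when passing from an element of even degree to $x+x^{-1}$), and that once it drops it must land in a $B$-component, which then stabilizes.

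First I would set $\beta := \theta(\alpha)$ and observe that $\beta$ is $\theta$-periodic: since $\alpha$ is a leaf, it sits at the top level of its tree, and $\theta$ pushes everything toward the cycle, so after finitely many steps we are on the cycle; in fact, because $\alpha \in A_t$ lies at level $d = r+2$ of a tree of depth $r+2$ (Theorem~\ref{thm_gq}(2),(4)), a single application of $\theta$... actually one needs to be a little careful about which level $\alpha$ occupies, but the point is that all iterates $\theta^i(\alpha)$ for $i \geq $ (depth) are on a cycle. So I may as well analyze the cyclic part. On the cycle, $\theta$ restricted to $\F_{2^t}^* \cap A_t$ is a bijection of the periodic points, and each periodic point is the root of a tree of depth $r+2$ (if in $A_t$) or depth $1$ (if in $B_t$) by Theorem~\ref{thm_gq}(2),(3).

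The key dichotomy: for $x$ a $\theta$-periodic point in $A_t$ with $\deg(x) = 2^r w$ ($w$ odd, $w \mid s$), apply Lemma~\ref{elm_3} with the field $\F_2(x)$ of size $2^{2^r w}$ in the role of "$\F_{2^t}$" — but wait, $x$ need not generate $\F_{2^t}$. I would instead argue: $\theta(x) = x + x^{-1}$ generates a subfield of $\F_2(x)$, so $\deg(\theta(x)) \mid \deg(x)$, and by Lemma~\ref{elm_3} (applied inside $\F_2(x)$, viewing $x$ as having degree $\deg(x)$ over $\F_2$) we get $\deg(\theta(x)) \in \{\deg(x), \tfrac{1}{2}\deg(x)\}$, the latter only if $\deg(x)$ is even, i.e. $r \geq 1$. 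So along the forward orbit the $2$-adic valuation of the degree is non-increasing, starting at $r$ (since $\deg(\alpha) = \deg(\beta) = 2^r v$), and each drop is by exactly one in valuation. Now the hypothesis $\deg(\theta(\alpha)) = d$ kills the first possible drop. The claim is then: \emph{either no drop ever happens (case (1)), or the first drop happens at some step, and then by the structure of components it must be the transition into a depth-$1$ tree living over the field $\F_{2^{\tilde t}}$ with $\tilde t = 2^{r-1}s$, after which the valuation is stuck at $r-1$ and the orbit stays in $B_{\tilde t}$ (case (2))}.

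The heart of the matter — and the main obstacle — is pinning down \emph{when} a drop can occur and showing it cannot occur during the first $r$ steps, and that once it occurs the component is a $B$-component over the smaller field. For the first point I would use Theorem~\ref{thm_gq}: if $\theta^i(\alpha)$ for some $1 \leq i \leq r$ had degree $\tfrac{d}{2} = 2^{r-1}v$, it would be a periodic point (or pre-periodic point close to the cycle) living in $\F_{2^{\tilde t}}$, but then its tree would have depth $r+1$ within $G_{2^{\tilde t}}$ if it is in $A_{\tilde t}$ — and one checks this is incompatible with $\alpha$ (at the top of a depth-$(r+2)$ tree in $G_{2^t}$) mapping into it after only $i \leq r$ steps, since Lemma~\ref{iter_trace}-type reasoning on traces, combined with Corollary~\ref{cor_lac}, forces $\alpha$ to be a leaf whose preimages behave predictably. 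For the second point, I would invoke that a degree drop sends $x$ with $x \in A_t$ to $\theta(x)$ of degree $2^{r-1}v$; by Lemma~\ref{elm_3} the \emph{only} way $\deg$ halves is $[\F_2(x) : \F_2(\theta(x))] = 2$, and then $\theta(x)$, being periodic, is the root of a tree of depth $r+2$ if in $A_{\tilde t}$ or depth $1$ if in $B_{\tilde t}$ (now with respect to the ground field index $\tilde t$, so $\tilde d := r+1$); a dimension/counting argument with part (4) of Theorem~\ref{thm_gq} applied to $G_{2^{\tilde t}}$ versus the number of vertices actually available in the orbit forces the depth-$1$ alternative, i.e. $\theta(x) \in B_{\tilde t}$, and then Theorem~\ref{thm_gq}(1) keeps the whole forward orbit in $B_{\tilde t} \cap \F_{2^{\tilde t}}^*$ with no further drops (a $B$-tree has depth $1$, its vertices have odd-times-$2^{r-1}$ degree, and $\theta$ on $B_{\tilde t}$ preserves degree by Lemma~\ref{elm_3} since a further halving would re-enter an $A$-component, contradiction). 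Assembling these gives exactly the two cases. I expect the delicate bookkeeping to be exactly the claim that the drop cannot happen before step $r+1$; that is where I would spend the most care, likely by an explicit induction on $i$ using the tree-level description in Theorem~\ref{thm_gq}(4),(5).
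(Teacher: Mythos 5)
Your skeleton matches the paper's: let $j$ be the first index at which $\deg(\theta^j(\alpha))\neq d$, note by Lemma~\ref{elm_3} that the degree can only halve, and show $j\in\{1,r+1\}$, with $j=1$ excluded by hypothesis. But you explicitly defer the crux (``the drop cannot happen before step $r+1$''), and the tools you propose for it --- trace bookkeeping via Lemma~\ref{iter_trace} and Corollary~\ref{cor_lac}, an induction on tree levels, a counting argument with Theorem~\ref{thm_gq}(4) --- are not what closes it. Your intermediate claims also point the wrong way: $\theta(\alpha)$ is \emph{not} $\theta$-periodic (it sits at level $r+1$ of a depth-$(r+2)$ tree), and the drop point is not usefully described as ``a periodic point or pre-periodic point close to the cycle.''

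The missing idea is that the first iterate of dropped degree, $\theta^j(\alpha)$, is a \emph{leaf} of $G_{2^{\tilde{t}}}$: its two $\theta$-preimages in $\overline{\F}_2$ are $\theta^{j-1}(\alpha)$ and its inverse, both generating a field of degree $d=2^r v$ over $\F_2$, hence both outside $\F_{2^{\tilde{t}}}$. A leaf lies at distance from the cycle equal to the depth of its tree, and in $G_{2^{\tilde{t}}}$ that depth is $1$ or $(r-1)+2=r+1$. Since $\alpha$ is a leaf of a depth-$(r+2)$ tree of $G_{2^t}$ (being in $A_t$), the iterate $\theta^j(\alpha)$ lies at distance $r+2-j$ from the cycle; equating gives $j\in\{1,r+1\}$, and $j=1$ is ruled out by $\deg(\theta(\alpha))=d$. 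With $j=r+1$ the drop lands on a leaf at distance $1$ from the cycle, i.e.\ on a depth-$1$ tree, which by Theorem~\ref{thm_gq}(2)--(3) forces $\theta^{r+1}(\alpha)\in B_{\tilde{t}}$, and Theorem~\ref{thm_gq}(1) keeps all later iterates in $B_{\tilde{t}}\subseteq\F_{2^{\tilde{t}}}^*$ --- no counting or trace argument is needed. Until you supply the leaf/distance-to-cycle observation, the proposal has a genuine gap at exactly the step you flagged as delicate.
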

\begin{proof}
According to Lemma \ref{lem_leaf_0} we have that $d=2^{r} \cdot v$ for some odd integer $v \mid s$. 

Now we suppose that $\deg(\theta^i(\alpha)) \not = d$ for some $i \in \N$. We define 
\begin{equation*}
j := \min \{i \in \N : \deg(\theta^i(\alpha)) \not = d \}.
\end{equation*}
We notice that $\deg(\theta^j(\alpha)) = \frac{d}{2} = 2^{r-1} \cdot v$. Hence 
\begin{equation*}
\theta^j (\alpha) \in \F_{2^{\tilde{t}}} \text{ and } \theta^{j-1} (\alpha) \not \in \F_{2^{\tilde{t}}}.
\end{equation*}

Therefore $\theta^j (\alpha)$ is a leaf of the graph $G_{2^{\tilde{t}}}$, whose connected components have depth either $1$ or $r+1 \geq 2$. Hence either $j = r+1$ or $j = 1$. This latter is not possible because $\deg(\theta(\alpha)) = d$ by hypothesis. Therefore $j=r+1$.
\end{proof}

In the following theorem we investigate the possible orders of the iterates of an element in $\F_{q^4}^{**}$ whose multiplicative order divides $q^2+1$.

\begin{theorem}\label{cen_thm}
Let $C_{q^2+1}$ be the cyclic subgroup of $\F_{q^4}^*$ of order $q^2+1$ and $H := C_{q^2+1} \backslash \{ 1\}$. Then $H = H_1 \cup H_2 \cup H_3$, where $H_1, H_2$ and $H_3$ are three subsets of $H$, whose pairwise intersections are empty, characterized as follows.
\begin{enumerate}
\item $\gamma \in H$ belongs to $H_1$ if and only if $|\theta(\gamma)|$ divides $q+1$ and $|\theta^i (\gamma)|$ divides $q-1$ for all the integers $i \geq 2$. 

\item $\gamma \in H$ belongs to $H_2$ if and only if
\begin{itemize}
\item for any $i \in \N$ with $1 \leq i \leq l+1$ there are two positive integers $d_i$ and $e_i$ greater than $1$  with $d_i \mid (q+1)$ and $e_i \mid (q-1)$ such that $|\theta^i (\gamma)| = d_i e_i$;
\item $|\theta^{l+2} (\gamma)|$ divides $q+1$;
\item $|\theta^i (\gamma)|$ divides $q-1$ for all $i \geq l+3$.
\end{itemize}
\item $\gamma \in H$ belongs to $H_3$ if and only if for any positive integer $i$ there are two positive integers $d_i$ and $e_i$ greater than $1$ with $d_i \mid (q+1)$ and $e_i \mid (q-1)$ such that $|\theta^i (\gamma)| = d_i e_i$.
\end{enumerate}
\end{theorem}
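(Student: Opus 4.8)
The plan is to follow, for a fixed $\gamma\in H$, the entire forward orbit $\gamma,\theta(\gamma),\theta^{2}(\gamma),\dots$, exploiting $\gamma^{q^{2}+1}=1$ (so $\gamma^{q^{2}}=\gamma^{-1}$) together with the description of $G_{q^{2}}$ in Theorem~\ref{thm_gq}. Write $n=2^{l}m$ with $m$ odd, so $2n=2^{l+1}m$. By Lemma~\ref{elm_1} the three factors of $q^{4}-1=(q-1)(q+1)(q^{2}+1)$ are pairwise coprime; in particular $\gcd(q-1,q+1)=1$, so every $\delta\in\F_{q^{2}}^{*}$ satisfies $|\delta|=\gcd(|\delta|,q+1)\cdot\gcd(|\delta|,q-1)$, the relation $|\delta|\mid q+1$ is equivalent to $\gcd(|\delta|,q-1)=1$, and $\delta\in\F_{q}^{*}$ is equivalent to $\gcd(|\delta|,q+1)=1$. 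I set $d_{i}:=\gcd(|\theta^{i}(\gamma)|,q+1)$ and $e_{i}:=\gcd(|\theta^{i}(\gamma)|,q-1)$, so $H_{3}$ consists of the $\gamma$ with $d_{i},e_{i}>1$ for every $i\ge 1$.

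I would first record two structural facts. \emph{(a)} Because $|\gamma|\mid q^{2}+1$ is coprime to $q-1$, $q+1$ and $q^{2}-1$, a direct check with $\gamma^{q^{2}}=\gamma^{-1}$ shows $\theta(\gamma)=\gamma+\gamma^{q^{2}}\in\F_{q^{2}}^{*}\setminus\F_{q}$ (hence also $\theta(\gamma)\neq 1$); and by Lemma~\ref{lem_lac} with parameter $2n$, $\theta(\gamma)\in\overline{\Omega}$, so $\theta(\gamma)$ is a leaf of $G_{q^{2}}$. \emph{(b)} For $x\in\F_{q^{2}}^{*}\setminus\{1\}$ one has $\theta(x)\neq 0$, and $\theta(x)\in\F_{q}$ if and only if $x\in\F_{q}^{*}$ or $|x|\mid q+1$; this is because $\theta(x)\in\F_{q}$ amounts to $\{x^{q},x^{-q}\}=\{x,x^{-1}\}$, i.e. $x\in\F_{q}^{*}$ or $x^{q+1}=1$. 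Since also $\theta(\F_{q}^{*})\subseteq\F_{q}$ and $\Tr_{2n}$ vanishes on $\F_{q}$ (so $\F_{q}^{*}\subseteq A_{2n}$, i.e. $B_{2n}\cap\F_{q}^{*}=\emptyset$), fact \emph{(b)} gives the \emph{propagation principle}: once $|\theta^{j}(\gamma)|$ divides $q+1$ for some $j$, all subsequent iterates lie in $\F_{q}$, so $|\theta^{i}(\gamma)|\mid q-1$ for $i>j$ --- with the proviso that the orbit may then enter $1\mapsto 0\mapsto\infty$; as the argument will show this happens only inside the eventual $H_{1}$-regime, and I treat it with the convention $|0|=|\infty|=1$ (equivalently, the divisibility claims are about iterates lying in $\F_{q^{4}}^{*}$).

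Now I would split on whether $\theta(\gamma)\in A_{2n}$ or $\theta(\gamma)\in B_{2n}$. If $\theta(\gamma)\in B_{2n}$: by Theorem~\ref{thm_gq}(1) $\theta(B_{2n})\subseteq B_{2n}$, so $\theta^{i}(\gamma)\in B_{2n}$ for all $i\ge 1$; as $B_{2n}\cap\F_{q}^{*}=\emptyset$ no iterate is in $\F_{q}$, so $d_{i}>1$ for all $i$; and if some $|\theta^{i}(\gamma)|$ divided $q+1$ then $\theta^{i+1}(\gamma)\in\F_{q}^{*}$ by \emph{(b)} (using $\theta^{i}(\gamma)\neq 1$, since $1\in A_{2n}$), contradicting $B_{2n}\cap\F_{q}^{*}=\emptyset$; so $e_{i}>1$ too, and $\gamma\in H_{3}$. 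If $\theta(\gamma)\in A_{2n}$: from $\theta(\gamma)\in\F_{q^{2}}\setminus\F_{q}$ one gets $\deg\theta(\gamma)=2^{l+1}w$ for some odd $w\mid m$ (it divides $2n$ but not $n$). If $e_{1}=1$, then $|\theta(\gamma)|\mid q+1$ and the propagation principle puts $\gamma$ in $H_{1}$. If $e_{1}>1$, then $|\theta(\gamma)|\nmid q+1$ and $\theta(\gamma)\notin\F_{q}$, so by \emph{(b)} $\theta^{2}(\gamma)\notin\F_{q}$; hence $\deg\theta^{2}(\gamma)$ has $2$-adic valuation $l+1$, and by Lemma~\ref{elm_3} this forces $\deg\theta^{2}(\gamma)=\deg\theta(\gamma)$. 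These are exactly the hypotheses under which Lemma~\ref{lem_leaf_1} applies to $\alpha:=\theta(\gamma)$, with its parameters $(r,s,\tilde t)$ taken as $(l+1,m,n)$.

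Lemma~\ref{lem_leaf_1} then leaves two cases. In its case~(1), $\deg\theta^{i}(\gamma)=2^{l+1}w$ for all $i\ge 1$, so every such iterate lies in $\F_{q^{2}}\setminus\F_{q}$ (as $2^{l+1}w\nmid n$), giving $d_{i}>1$; if some $|\theta^{i}(\gamma)|$ divided $q+1$, then $\theta^{i+1}(\gamma)\in\F_{q}$ by \emph{(b)}, impossible, so $e_{i}>1$ as well, and $\gamma\in H_{3}$. In its case~(2), $\deg\theta^{i}(\gamma)=2^{l+1}w$ for $1\le i\le l+2$ while $\theta^{i}(\gamma)\in B_{n}\cap\F_{q}^{*}$ for $i\ge l+3$: the same reasoning on $1\le i\le l+1$ yields $d_{i},e_{i}>1$; the transition $\theta^{l+2}(\gamma)\notin\F_{q}$, $\theta^{l+3}(\gamma)\in\F_{q}^{*}$ forces $|\theta^{l+2}(\gamma)|\mid q+1$ by \emph{(b)}; and for $i\ge l+3$ the iterates are in $\F_{q}^{*}$, so $|\theta^{i}(\gamma)|\mid q-1$ --- hence $\gamma\in H_{2}$. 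Pairwise disjointness is then immediate: $H_{1}$ forces $e_{1}=1$ while $H_{2},H_{3}$ force $e_{1}>1$, and $H_{2}$ forces $\gcd(|\theta^{l+2}(\gamma)|,q-1)=1$ while $H_{3}$ forces $\gcd(|\theta^{l+2}(\gamma)|,q-1)>1$. The step I expect to be the main obstacle is precisely the case $\theta(\gamma)\in A_{2n}$ with $e_{1}>1$: checking every hypothesis of Lemma~\ref{lem_leaf_1} for $\alpha=\theta(\gamma)$ --- above all the degree-stability $\deg\theta^{2}(\gamma)=\deg\theta(\gamma)$, which rests on combining \emph{(b)} with Lemma~\ref{elm_3} --- keeping the index shift between the iterates of $\alpha$ and of $\gamma$ straight, and translating the degree- and field-membership conclusions of that lemma faithfully into the multiplicative-order statements defining $H_{1},H_{2},H_{3}$; a minor additional point is to confirm that orbits meeting $\{0,\infty\}$ occur only inside $H_{1}$.
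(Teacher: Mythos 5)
Your proposal is correct and follows essentially the same route as the paper: the same case split on whether $|\theta(\gamma)|$ divides $q+1$ and then on $\theta(\gamma)\in A_{2n}$ versus $B_{2n}$, the same use of Lemma \ref{elm_3} to establish degree stability for $\alpha=\theta(\gamma)$, and the same invocation of Lemma \ref{lem_leaf_1} whose two outcomes yield $H_3$ and $H_2$ respectively. Your only deviations are cosmetic: you prove the field-descent criterion (your fact \emph{(b)}) directly from the quadratic $T^2+\theta(x)T+1$ instead of citing Lemma \ref{lem_lac}, and you dispose of the $B_{2n}$ sub-case via $\theta(B_{2n})\subseteq B_{2n}$ and $B_{2n}\cap\F_q^*=\emptyset$ rather than the paper's cycle argument.
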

\begin{proof}
First we notice that $q^2 = 2^{2n}$.

We prove that the pairwise intersections of the sets $H_1$, $H_2$ and $H_3$ are empty.
If $\gamma \in H_1$, then $|\theta(\gamma)|$ divides $q+1$. By definition, if $\gamma \in H_2 \cup H_3$, then $|\theta(\gamma)|$ does not divide $q+1$. Therefore $H_1 \cap H_2 = \emptyset$ and $H_1 \cap H_3 = \emptyset$. Finally, if $\gamma \in H_2$, then $|\theta^{l+3} (\gamma)|$ is a divisor of $q-1$. Therefore $\gamma \not \in H_3$.

Now we prove that, if $\gamma \in H$, then $h \in H_1 \cup H_2 \cup H_3$.
We deal separately with some cases.

\begin{itemize}
\item \emph{Case 1:} $|\theta(\gamma)|$ divides $q+1$.

Then $\theta^2(\gamma) \in \F_q^*$ according to Lemma \ref{lem_lac} and any iterate $\theta^i(\gamma)$ belongs to $\F_q^*$ for $i \geq 2$. Hence $\gamma \in H_1$.

\item \emph{Case 2:} $|\theta(\gamma)|$ does not divide $q+1$.

Let $t:=2n$. According to Lemma \ref{lem_lac_0} we have that $\theta(\gamma) \in \F_{q^2}^*$. Nevertheless, $\theta(\gamma) \not \in \F_q^*$. In fact, according to Lemma \ref{lem_lac}, any element in $\F_q^*$ can be expressed as 
\begin{displaymath}
\theta(\alpha^i) \ \text{ or } \ \theta(\beta^i)
\end{displaymath} 
for some positive integer $i$, where $|\alpha| = 2^n-1$ and $| \beta | = 2^n+1$. Since $|\gamma| \mid (2^{2n}+1)$ with $|\gamma| > 1$ and $\gcd(2^{2n}+1, 2^{2n} - 1) =1$, we conclude that $\gamma \not \in \F_q^*$.
Hence 
\begin{align*}
\gamma & \in \F_{q^4}^* \backslash \F_{q^2}^*,\\
\theta(\gamma) & \in \F_{q^2}^* \backslash \F_q^*.
\end{align*}

 Moreover $\theta(\gamma)$ is a leaf of $G_{q^2}$ because $\gamma \not \in \F_{q^2}$
 and either $\theta(\gamma) \in A_{2n}$ or $\theta(\gamma) \in B_{2n}$. 

\begin{itemize}
\item \emph{Sub-case 1:}  $\theta(\gamma) \in A_{2n}$. 

Let $\alpha := \theta(\gamma)$.
According to Lemma \ref{lem_leaf_0} we have that $d:= \deg(\alpha) = 2^{l+1} \cdot v$ for some odd integer $v \mid s$. 

Let $r:=l+1$, $s:=m$, $t:=2^r \cdot s$ and $\deg(\alpha) = d$.
According to Lemma \ref{elm_3} we have that $\deg (\theta(\alpha)) \in \left\{d, \frac{d}{2} \right\}$. If $\deg(\theta(\alpha)) = \frac{d}{2}$ then $\theta(\alpha) \in \F_{q}^*$ and $|\theta(\alpha)| \mid (q-1)$. This latter is not possible because $|\alpha| \nmid (q+1)$. Hence $\deg(\theta(\alpha)) = d$. 

Therefore the hypotheses of Lemma $\ref{lem_leaf_1}$ are satisfied.

If $\deg(\theta^i (\alpha)) = d$ for all $i \in \N$, then there is no $i \in \N$ such that $|\theta^i (\alpha)| \mid (q-1)$. In fact, if $|\theta^i (\alpha)| \mid (q-1)$ for some $i \in \N$, then $\theta^i (\alpha) \in \F_q$ and $\deg(\theta^i (\alpha)) < d$.

Moreover, if $|\theta^i (\alpha)| \mid (q+1)$ for some $i \in \N$, then $|\theta^{i+1} (\alpha)| \mid (q-1)$ according to Lemma \ref{lem_lac}, namely $\theta^{i+1} (\alpha) \in \F_q$. Since this latter is not possible, we conclude that $|\theta^i (\gamma)| = d_i e_i$ for some positive integers $d_i$ and $e_i$ greater than $1$ with $d_i \mid (q+1)$ and $e_i \mid (q-1)$. Therefore $\gamma \in H_3$.

If $\deg(\theta^i(\alpha)) \not = d$ for some $i \in \N$, then $\theta^i (\alpha) \in B_{t/2} \cap \F_q^*$ for any $i \geq r+1 = l+2$, namely  $\theta^i (\gamma) \in B_{t/2} \cap \F_q^*$ for any $i \geq l+3$. Moreover $\deg(\theta^i(\gamma)) = d$ for any $i$ with $1 \leq i \leq l+2$. Since $\theta^{l+3} (\gamma) \in \F_q^*$, according to Lemma \ref{lem_lac} we have that $|\theta^{l+2} (\gamma)|$ divides $q+1$. Finally, we can prove as above that $|\theta^{i} (\gamma)|$ is neither a divisor of $q+1$ nor a divisor of $q-1$ for all $i$ with $1 \leq i \leq l+1$. Hence $\gamma \in H_2$.

\item \emph{Sub-case 2:}  $\theta(\gamma) \in B_{2n}$. Then $\theta(\gamma)$ is a leaf of a tree of depth $1$ in $G_{q^2}$ and both the elements $\theta(\gamma)$ and $(\theta(\gamma))^{-1}$ do not belong to $\F_{q}^*$. Moreover $(\theta(\gamma))^{-1}$ is $\theta$-periodic and belongs to the same cycle of the iterates $\theta^i (\gamma)$ for any positive integer $i$.  Therefore $\theta^i (\gamma) \not \in \F_{q}^*$ for any positive integer $i$ and $|\theta^i (\gamma)|$ does not divide $q-1$. Moreover, if $|\theta^i (\gamma)|$ divided $q+1$ for some positive integer $i$, then $\theta^{i+1} (\gamma)$ would belong to $\F_q^*$ according to Lemma \ref{lem_lac}, in contradiction with the fact that no iterate of $\gamma$ belongs to $\F_q^*$. Since $|\theta^i (\gamma)|$ divides $q^2-1$, we conclude that for any positive integer $i$ there are two positive integers $d_i$ and $e_i$ greater than $1$ with $d_i \mid (q+1)$ and $e_i \mid (q-1)$ such that $|\theta^i (\gamma)| = d_i e_i$. Hence $\gamma \in H_3$. \qedhere
\end{itemize}
\end{itemize}
\end{proof}

\begin{corollary}\label{cen_cor}
If $H, H_1, H_2$ and $H_3$ are defined as in Theorem \ref{cen_thm}, then the following hold.
\begin{enumerate}
\item If $\gamma \in H_1$, then $\Tr_n (\theta^2 (\gamma)) = \Tr_n ((\theta^2 (\gamma))^{-1}) = 1$, while $\Tr_n (\theta^i (\gamma)) = \Tr_n ((\theta^i (\gamma))^{-1}) = 0$ for all $i \geq 2$.
\item If $\gamma \in H_2$, then $\Tr_n (\theta^{l+3} (\gamma)) = 0$ and $\Tr_n ((\theta^{l+3} (\gamma))^{-1}) = 1$. Moreover $\Tr_n (\theta^i (\gamma)) = 1$ and $\Tr_n ((\theta^i (\gamma))^{-1}) = 0$ for all $i \geq l+4$.
\end{enumerate}
\end{corollary}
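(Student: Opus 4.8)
The idea is to pin down, for each iterate $\theta^i(\gamma)$ in question, its position in the graph $G_q=G_{2^n}$ and then to read off $\Tr_n$ from Corollary~\ref{cor_lac}, Lemma~\ref{lem_lac}, Theorem~\ref{thm_gq}(1) and the standard fact that, in characteristic $2$, $Y^2+xY+1$ has a root in an extension $L$ of $\F_2$ if and only if $\Tr_{[L:\F_2]}(x^{-1})=0$. I shall use that $\Tr_n$ is additive and fixed by $z\mapsto z^{2^k}$, together with two preliminary observations. First, for $\gamma\in H$ one has $\theta(\gamma)\notin\{0,1,\infty\}$: indeed $\theta(\gamma)=1$ would force $|\gamma|=3$, which is impossible since $3\nmid q^2+1$, and $\theta(\gamma)\in\{0,\infty\}$ is excluded because $\gamma\notin\{0,1,\infty\}$. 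Second, if $y\in\F_q^*\setminus\{1\}$ then $\theta(y)=y+y^{-1}\in\F_q^*$ and, $y$ being a root in $\F_q$ of $Y^2+\theta(y)Y+1$, we get $\Tr_n((\theta(y))^{-1})=0$.

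\emph{Part (1).} Let $\gamma\in H_1$, so that $\theta^i(\gamma)\in\F_q^*$ for every $i\geq2$ by Theorem~\ref{cen_thm}(1). Put $\delta:=\theta(\gamma)$; then $\delta\in\F_{q^2}^{**}$, $|\delta|\mid q+1$, and hence $\delta^q=\delta^{-1}$. Applying Lemma~\ref{lem_lac} with $t=n$ to $\delta$ gives $\theta^2(\gamma)=\theta(\delta)\in\overline\Omega$, that is, $\Tr_n((\theta^2(\gamma))^{-1})=1$. On the other hand $\theta^2(\gamma)=\delta+\delta^{-1}=\delta+\delta^q=\Tr_{\F_{q^2}/\F_q}(\delta)$, so by transitivity of the trace $\Tr_n(\theta^2(\gamma))=\Tr_{2n}(\delta)=\Tr_{2n}(\delta^q)=\Tr_{2n}(\delta^{-1})$, and this equals $1$ by Lemma~\ref{lem_lac} with $t=2n$ applied to $\gamma\in\F_{q^4}^{**}$ with $|\gamma|\mid q^2+1$. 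Therefore $\Tr_n(\theta^2(\gamma))=\Tr_n((\theta^2(\gamma))^{-1})=1$, and in particular $\theta^2(\gamma)\in A_n$. By Theorem~\ref{thm_gq}(1) the whole connected component of $\theta^2(\gamma)$ lies in $A_n$, so $\theta^i(\gamma)\in A_n$ for all $i\geq2$; for $i\geq3$ we have $\theta^i(\gamma)=\theta(\theta^{i-1}(\gamma))$ with $\theta^{i-1}(\gamma)\in\F_q^*\setminus\{1\}$, hence $\Tr_n((\theta^i(\gamma))^{-1})=0$ by the second observation, and the defining relation of $A_n$ then gives $\Tr_n(\theta^i(\gamma))=\Tr_n((\theta^i(\gamma))^{-1})=0$. (Thus the range in the last assertion should be read as $i\geq3$, the case $i=2$ being the one just settled.)

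\emph{Part (2).} Let $\gamma\in H_2$. From the proof of Theorem~\ref{cen_thm} (Sub-case 1 of Case 2) we already have $|\theta^{l+2}(\gamma)|\mid q+1$ and $\theta^i(\gamma)\in B_n\cap\F_q^*$ for all $i\geq l+3$. Hence $\theta^{l+3}(\gamma)$ is a leaf of $G_q$: its only $\theta$-preimages are $\theta^{l+2}(\gamma)$ and its inverse, both of order dividing $q+1$ and greater than $1$, hence outside $\F_q^*$ by Lemma~\ref{elm_2}. Being a leaf lying in $B_n$, it satisfies $\Tr_n(\theta^{l+3}(\gamma))=0$ and $\Tr_n((\theta^{l+3}(\gamma))^{-1})=1$ by Corollary~\ref{cor_lac}. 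For $i\geq l+4$, $\theta^i(\gamma)=\theta(\theta^{i-1}(\gamma))$ with $\theta^{i-1}(\gamma)\in\F_q^*\setminus\{1\}$ (note $1\notin B_n$), whence $\Tr_n((\theta^i(\gamma))^{-1})=0$ by the second observation, and membership of $\theta^i(\gamma)$ in $B_n$ forces $\Tr_n(\theta^i(\gamma))=1$.

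\emph{The main obstacle.} Part (2) amounts to bookkeeping on top of the structure already extracted in the proof of Theorem~\ref{cen_thm}, where membership in $B_n$ was recorded. The single genuinely new ingredient lies in Part (1): Theorem~\ref{cen_thm} does not tell us whether $\theta^2(\gamma)$ falls in $A_n$ or in $B_n$, and this must be decided via the relative-trace identity $\Tr_n(\theta^2(\gamma))=\Tr_{2n}(\theta(\gamma))=\Tr_{2n}((\theta(\gamma))^{-1})=1$. A lesser point to keep in mind is the handling of degenerate configurations in which an iterate lands in $\{0,1,\infty\}$ (so that a trace becomes meaningless or $\theta$ takes its exceptional value); these are precluded by the order-$3$ remark above, but deserve an explicit word.
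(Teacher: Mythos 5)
Your proof is correct, and Part (1) takes a genuinely different route from the paper's. The paper argues purely graph-theoretically: it first shows $\theta^2(\gamma)$ cannot be $\theta$-periodic (otherwise $\theta(\gamma)\in\F_{q^2}^*\setminus\F_q^*$ would be a child of $\theta^2(\gamma)$ inside $G_q$), deduces that $\theta^2(\gamma)$ is a leaf of a tree of depth at least $2$ in $G_q$, hence lies in $A_n$ since trees in $B_n$ have depth $1$ by Theorem~\ref{thm_gq}, and then reads off both traces from Corollary~\ref{cor_lac}. You instead compute the traces directly: $\Tr_n((\theta^2(\gamma))^{-1})=1$ from Lemma~\ref{lem_lac} at level $n$, and $\Tr_n(\theta^2(\gamma))=\Tr_{2n}(\theta(\gamma))=\Tr_{2n}((\theta(\gamma))^{-1})=1$ via the identity $\delta+\delta^{-1}=\delta+\delta^{q}$ for $|\delta|\mid q+1$ and transitivity of the trace, using Lemma~\ref{lem_lac} again at level $2n$. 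Your computation is valid and arguably more self-contained, since it does not need the periodicity/depth analysis; the paper's version buys the extra structural information that $\theta^2(\gamma)$ is a leaf of a deep tree, which it reuses elsewhere. Your Part (2) is essentially the paper's argument, except that you import the membership $\theta^i(\gamma)\in B_n$ for $i\geq l+3$ from the proof of Theorem~\ref{cen_thm} rather than re-deriving it from tree depths; both are legitimate. You are also right that the range ``$i\geq 2$'' in the statement must be read as $i\geq 3$.

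One small correction: your claim that iterates landing in $\{0,1,\infty\}$ are ``precluded by the order-$3$ remark'' is not accurate. That remark only rules out $\theta(\gamma)=1$; it does not prevent $\theta^{i}(\gamma)=1$ for some $i\geq 2$ (e.g.\ when $\theta^2(\gamma)$ is a leaf of the tree rooted at $\infty$, the forward orbit passes through $1$, $0$ and $\infty$). At such points your ``second observation'' (the root-of-$Y^2+\theta(y)Y+1$ argument) does not apply because $\theta(y)=0$ or the iterate is $0$ or $\infty$. The conclusion still holds there under the paper's conventions $\Tr_n(0)=\Tr_n(\infty)=0$ and $0^{-1}=0$, $\infty^{-1}=\infty$ (and, in the case $\theta^2(\gamma)=1$, because this forces $n$ odd so that $\Tr_n(1)=1$), so this is a trivially patchable omission rather than a real gap, but it should be handled explicitly rather than waved away.
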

\begin{proof}
We prove separately the claims.
\begin{enumerate}
\item If $\gamma \in H_1$ then, according to Theorem \ref{cen_thm},
\begin{align*}
\gamma & \in \F_{q^4}^* \backslash \F_{q^2}^*,\\
\theta(\gamma) & \in \F_{q^2}^* \backslash \F_q^*, \\
\theta^2(\gamma) & \in \F_q^*.
\end{align*}

We notice that $\theta^2 (\gamma)$ cannot be $\theta$-periodic. In fact, if $\theta^2(\gamma)$ were $\theta$-periodic, then $\theta^2 (\gamma)$ would be the root of a tree of depth at least 1 in $G_q$ and $\theta(\gamma)$, which is not $\theta$-periodic, would be a child of $\theta^2 (\gamma)$  in $G_q$. This latter is absurd because $\theta (\gamma) \in \F_{q^2}^* \backslash \F_q^*$.  

Since $\theta^2 (\gamma)$ is not $\theta$-periodic, $\theta(\gamma)$ is not $\theta$-periodic too. Therefore $\gamma$ belongs to a level not smaller than $4$ of a tree in $G_{q^4}$. Consequently $\theta^2 (\gamma)$ is a leaf of a tree having depth at least $2$ in $G_q$. Therefore $\theta^2(\gamma) \in A_n$.

\item If $\gamma \in H_2$, then $\theta(\gamma)$ is a leaf of a tree in $G_{q^2}$. Since $\theta(\gamma) \not \in \F_q$ and $\theta^{l+2} (\gamma) \not \in \F_q$, while $\theta^{l+3} (\gamma) \in \F_q$, we can say that neither $\theta(\gamma)$ nor $\theta^{l+2} (\gamma)$ are $\theta$-periodic. Therefore $\theta (\gamma)$ is a leaf of a tree of depth $l+3$ in $G_{q^2}$, while $\theta^{l+3} (\gamma)$ is a leaf of a tree of depth $1$ in $G_q$. Therefore $\theta^{l+3} (\gamma)$ and all its iterates are in $B_{n}$. \qedhere
\end{enumerate}
\end{proof}

\subsection{Distribution of orders and traces in $G_q$, $G_{q^2}$ and $G_{q^4}$}\label{Sub_distributions}
As a consequence of Theorem \ref{cen_thm} and Corollary \ref{cen_cor} we have three possible scenarios  for the iterates of an element $\gamma \in \F_{q^4}^{**}$ such that $d:=| \gamma |$ divides $(q^2+1)$. The data are collected in three distinct tables. 

For the sake of brevity, we introduce the following notations:
\begin{itemize}
\item $\gamma_i$ is the $i$-th iterate $\theta^i(\gamma)$ of $\gamma$;
\item $d_i := |\gamma_i|$;
\item $d_i \nmid (q \pm 1)$ stands for $d_i \nmid (q + 1)$ and $d_i \nmid (q-1)$.
\end{itemize}

Moreover we conventionally define $0^{-1} = 0$,  $\infty^{-1} = \infty$, $|0| = 1$, $|\infty|=1$ and $\Tr_n(\infty) = 0$.

The levels refer to the graph $G_{q^4}$.

\subsubsection{Case 1: $d_1 \mid (q+1)$}
\mbox{}

\begin{center}
\begin{tabular}{|c|c|c|}
\hline 
Level  & Order  & Trace \\ 
\hline 
$l+4$ & $d \mid (q^2+1)$ &  $\Tr_{4n} (\gamma) = \Tr_{4n} (\gamma^{-1}) = 1$ \\ 
\hline 
$l+3$ & $d_1 \mid (q+1)$ & $\Tr_{2n} (\gamma_1) = \Tr_{2n} (\gamma_1^{-1}) = 1$ \\ 
\hline 
$l+2$ & $d_2 \mid (q-1)$ & $\Tr_{n} (\gamma_2) = \Tr_{n} (\gamma_2^{-1}) = 1$ \\ 
\hline 
$l+1$ & \vdots & $\Tr_{n} (\gamma_3) = \Tr_{n} (\gamma_3^{-1}) = 0$ \\ 
\hline 
\vdots & \vdots & \vdots \\ 
\hline 
$0$ &  \vdots  & \vdots \\ 
\hline 
\end{tabular} 
\end{center}

\subsubsection{Case 2:} $d_1 \nmid (q+1)$, $d_{l+2} \mid (q+1)$.
\mbox{}

\begin{center}
\begin{tabular}{|c|c|c|}
\hline 
Level  & Order  & Trace \\ 
\hline 
$l+4$ & $d \mid (q^2+1)$ &  $\Tr_{4n} (\gamma) = \Tr_{4n} (\gamma^{-1}) = 1$ \\ 
\hline 
$l+3$ & $d_1 \mid (q^2-1), d_1 \nmid (q \pm 1)$ & $\Tr_{2n} (\gamma_1) = \Tr_{2n} (\gamma_1^{-1}) = 1$ \\ 
\hline 
$l+2$ & $d_2 \mid (q^2-1), d_2 \nmid (q \pm 1)$ & $\Tr_{2n} (\gamma_2) = \Tr_{2n} (\gamma_2^{-1}) = 0$ \\
\hline
\vdots & \vdots & \vdots \\ 
\hline 
$3$ & \vdots & \vdots \\
\hline 
$2$ & $d_{l+2} \mid (q+1)$ & $\Tr_{2n} (\gamma_{l+2}) = \Tr_{2n} (\gamma_{l+2}^{-1}) = 0$ \\ 
\hline 
$1$ & $d_{l+3} \mid (q-1)$ & $\Tr_{n} (\gamma_{l+3}) = 0, \Tr_{n} (\gamma_{l+3}^{-1}) = 1$  \\ 
\hline 
$0$ & $d_{l+4} \mid (q-1)$ & $\Tr_{n} (\gamma_{l+4}) = 1, \Tr_{n} (\gamma_{l+4}^{-1}) = 0$ \\ 
\hline 
\end{tabular} 
\end{center}

\subsubsection{Case 3:} $d_1 \nmid (q+1)$, $d_{l+2} \nmid (q+1)$.
\mbox{}

\begin{center}
\begin{tabular}{|c|c|c|}
\hline 
Level  & Order  & Trace \\ 
\hline 
$l+4$ & $d \mid (q^2+1)$ &  $\Tr_{4n} (\gamma) = \Tr_{4n} (\gamma^{-1}) = 1$ \\ 
\hline 
$l+3$ & $d_1 \mid (q^2-1), d_1 \nmid (q \pm 1)$ & $\Tr_{2n} (\gamma_1) = \Tr_{2n} (\gamma_1^{-1}) = 1$ \\ 
\hline 
$l+2$ & \vdots & $\Tr_{2n} (\gamma_2) = \Tr_{2n} (\gamma_2^{-1}) = 0$ \\
\hline
\vdots & \vdots & \vdots \\ 
\hline
$0$ & \vdots &  $\Tr_{2n} (\gamma_{l+4}) = \Tr_{2n} (\gamma_{l+4}^{-1}) = 0$ \\ 
\hline 
\end{tabular} 
\end{center}

\vspace{0.2cm}

In Case 1 we can have different sub-cases.
\begin{lemma}\label{lem_case_1}
In Case 1, if $l \geq 1$ and $\tilde{q} := \sqrt{q}$, then one of the following holds.
\begin{itemize}
\item \emph{Sub-case 1:} $d_2 \nmid (\tilde{q}+1)$. Then $d_i \nmid (\tilde{q} \pm 1)$ for all $i$ such that $2 \leq i \leq l+1$.
\item \emph{Sub-case 2:} $d_2 \mid (\tilde{q}+1)$. Then $d_i \mid (\tilde{q} - 1)$ for all $i \geq 3$.
\end{itemize}
\end{lemma}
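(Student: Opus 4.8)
The plan is to reduce the statement to the structure of the single iterate $\gamma_2 = \theta^2(\gamma) \in \F_q^*$ and then to run the argument of Theorem~\ref{cen_thm} one level down in the tower $\F_{\tilde{q}} \subseteq \F_q \subseteq \F_{q^2} \subseteq \F_{q^4}$, applying Lemmas~\ref{lem_lac_0}, \ref{lem_lac} and \ref{lem_leaf_1} with $n/2$ in place of the field-size parameter (recall $\tilde{q} = 2^{n/2}$, so $\tilde{q}^2 = q$, and $n/2 = 2^{l-1} m$). First I would collect what is already available in Case~1, where $\gamma$ lies in $H_1$: by Theorem~\ref{cen_thm} and the proof of Corollary~\ref{cen_cor}, $\theta(\gamma) \in \F_{q^2}^* \backslash \F_q^*$ and $\gamma_2 \in \F_q^* \cap A_n$, while $\gamma_2$ is a leaf of $G_q$ (its only $\theta$-preimages, $\theta(\gamma)$ and $\theta(\gamma)^{-1}$, lie outside $\F_q$); moreover $d_i = |\gamma_i|$ divides $q-1 = \tilde{q}^2-1$ for every $i \geq 2$. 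Applying Lemma~\ref{lem_leaf_0} to the leaf $\gamma_2$ of $G_{2^n}$ gives $\gamma_2 \notin \{0, \infty\}$ and $\deg(\gamma_2) = 2^l v$ for some odd $v \mid m$; since $l \geq 1$ this degree is at least $2$, so $\gamma_2 \in \F_q^{**}$, and since $2^l v$ has $2$-adic valuation $l$ whereas $n/2$ has valuation $l-1$, every iterate of degree $2^l v$ lies outside $\F_{\tilde{q}} = \F_{2^{n/2}}$. It then remains to establish the stated conclusion according to whether or not $d_2$ divides $\tilde{q}+1$; these two possibilities are exactly Sub-cases~2 and~1.

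If $d_2 \mid \tilde{q}+1$ (Sub-case~2): since $\gamma_2 \in \F_q^{**}$ has order dividing $\tilde{q}+1$, Lemma~\ref{lem_lac} with parameter $n/2$ (so that $\F_{2^{2t}} = \F_q$ and $\overline{\Omega} \subseteq \F_{\tilde{q}}^*$) gives $\gamma_3 = \theta(\gamma_2) \in \F_{\tilde{q}}^*$. As $\theta$ maps $\F_{\tilde{q}} \cup \{\infty\}$ into itself, every $\gamma_i$ with $i \geq 3$ lies in $\F_{\tilde{q}} \cup \{\infty\}$, so $d_i \mid \tilde{q}-1$ for all $i \geq 3$ (using $|\infty| = 1$), which is the claim.

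If $d_2 \nmid \tilde{q}+1$ (Sub-case~1): I would first check $\deg(\theta(\gamma_2)) = \deg(\gamma_2) =: d = 2^l v$. By Lemma~\ref{elm_3} the only alternative is $\deg(\gamma_3) = d/2$, which forces $\gamma_3 \in \F_{\tilde{q}}^*$; but the $\theta$-preimages of $\gamma_3$ are exactly $\gamma_2$ and $\gamma_2^{-1}$, which are distinct (as $d_2 > 1$) and both outside $\F_{\tilde{q}}$, so the same preimage analysis via Lemma~\ref{lem_lac_0} used in the proof of Theorem~\ref{cen_thm} (Case~2, Sub-case~1) gives either $\gamma_3 \in \Omega$, whence $\gamma_2 \in \F_{\tilde{q}}^*$, or $\gamma_3 \in \overline{\Omega}$, whence $d_2 = |\gamma_2|$ divides $\tilde{q}+1$; both conclusions contradict our hypotheses. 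Hence Lemma~\ref{lem_leaf_1} applies with $\alpha = \gamma_2 \in A_n \cap \F_q^*$, $r = l$, $s = m$ and $\tilde{t} = n/2$: either $\deg(\theta^j(\gamma_2)) = d$ for all $j \in \N$, or $\deg(\theta^j(\gamma_2)) = d$ for $1 \leq j \leq l$ while $\theta^j(\gamma_2) \in B_{n/2} \cap \F_{\tilde{q}}^*$ for $j \geq l+1$. Since $\theta^j(\gamma_2) = \gamma_{2+j}$, in both cases $\deg(\gamma_i) = d = 2^l v$, hence $\gamma_i \notin \F_{\tilde{q}}$, for all $i$ with $2 \leq i \leq l+2$. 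Now for each $i$ with $2 \leq i \leq l+1$: if $d_i \mid \tilde{q}-1$ then $\gamma_i \in \F_{\tilde{q}}^*$, a contradiction; and if $d_i \mid \tilde{q}+1$ then Lemma~\ref{lem_lac} (parameter $n/2$) applied to $\gamma_i \in \F_q^{**}$ gives $\gamma_{i+1} = \theta(\gamma_i) \in \F_{\tilde{q}}^*$, contradicting $\gamma_{i+1} \notin \F_{\tilde{q}}$ since $i+1 \leq l+2$. Therefore $d_i \nmid \tilde{q} \pm 1$ for all $2 \leq i \leq l+1$, which is the claim.

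The argument contains no genuinely hard step; the care required is all bookkeeping: substituting $n/2$ consistently for the field-size parameter in Lemmas~\ref{lem_lac_0}, \ref{lem_lac} and \ref{lem_leaf_1}, re-indexing the conclusions of Lemma~\ref{lem_leaf_1} (whose iterates $\theta^j(\gamma_2)$ are the $\gamma_{2+j}$), and verifying the degree hypothesis $\deg(\theta(\gamma_2)) = \deg(\gamma_2)$ of Lemma~\ref{lem_leaf_1} in Sub-case~1. The hypothesis $l \geq 1$ is used both to make $\tilde{q}$ an integer and to ensure $2^l v$ has strictly larger $2$-adic valuation than $n/2$, which is exactly what keeps the relevant iterates outside $\F_{\tilde{q}}$.
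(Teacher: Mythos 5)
Your proof is correct and rests on the same idea as the paper's: descend one level in the tower $\F_{\tilde{q}} \subset \F_q \subset \F_{q^2} \subset \F_{q^4}$ and rule out $d_i \mid (\tilde{q} \pm 1)$ via Lemmas \ref{lem_lac_0} and \ref{lem_lac}. The paper gets there faster by establishing only $d_2 \nmid (\tilde{q} \pm 1)$ and then relabelling ($q := \tilde{q}$, $n := n/2$, $\gamma := \gamma_1$, $l := l-1$) so that Cases 2 and 3 of the tables --- i.e.\ Theorem \ref{cen_thm} applied at the lower level --- deliver the remaining indices $2 \leq i \leq l+1$, whereas you re-run that argument explicitly through Lemmas \ref{lem_leaf_0} and \ref{lem_leaf_1}; both routes are valid.
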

\begin{proof}
We deal separately with the two sub-cases.

\begin{itemize}
\item \emph{Sub-case 1:} if $d_2 \nmid (\tilde{q}+1)$, then $d_2 \nmid (\tilde{q}-1)$ too. In fact, if $d_2 \mid (\tilde{q}-1)$, then $d_1 \mid (\tilde{q}+1)$ according to Lemma \ref{lem_lac}. This latter is not possible since $\gcd(q+1,\tilde{q}+1)=1$. Hence, if we define 
\begin{equation*}
q := \tilde{q}, \ n:= \frac{n}{2}, \ \gamma := \gamma_1, \ l:= l-1,\\
\end{equation*}
we have that either Case 2 or Case 3 holds for $\gamma$.
\item \emph{Sub-case 2:} if $d_2 \mid (\tilde{q}+1)$, then $d_3 \mid (\tilde{q}-1)$ according to Lemma \ref{lem_lac}. Therefore $d_i \mid (\tilde{q}-1)$ for all $i \geq 3$. 
\end{itemize}
\end{proof}

\begin{remark}
If Sub-case 2 holds in Lemma \ref{lem_case_1}, then we can define
\begin{equation*}
q := \tilde{q}, \ n:= \frac{n}{2}, \ \gamma := \gamma_1, \ l:= l-1.\\
\end{equation*}
Then Case 1 holds for $\gamma$.
\end{remark}

\begin{lemma}\label{lem_l3_2}
If $\alpha \in \F_{q^2}^{**}$ is an element of a connected component of $G_{q^2}$ such that $|\alpha|$ divides $q+1$, then $\alpha$ lies on the level $l+3$ or $2$ of $G_{q^2}$. Moreover, if $\alpha$ lies on the level $l+3$ (resp. $2$), then the order of any vertex at the level $l+3$ (resp. $2$) divides $q+1$.   
\end{lemma}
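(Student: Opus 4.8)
The plan is to locate, inside $G_{q^2}$, exactly where an element of order dividing $q+1$ can sit, using the tree structure from Theorem \ref{thm_gq} together with the description of the leaves in Lemma \ref{lem_lac} and the divisibility facts in Lemma \ref{elm_1}. Write $q = 2^n$ with $n = 2^l m$, $m$ odd, so $q^2 = 2^{2n}$ and the parameter governing the depth of trees in $G_{q^2}$ is $r+2 = (l+1)+2 = l+3$. By Theorem \ref{thm_gq} every connected component of $G_{q^2}$ whose vertices lie in $A_{2n}$ has trees of depth $l+3$, and every component with vertices in $B_{2n}$ has trees of depth $1$; moreover the cycle vertices are the roots (level $0$) and a leaf sits at level $l+3$ (in the $A$-case) or level $1$ (in the $B$-case).

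First I would show that $\alpha$ lies in a component of the first type (trees of depth $l+3$) exactly when $\alpha$ is not $\theta$-periodic, and of the second type when $\alpha$ is $\theta$-periodic. Indeed, since $|\alpha| \mid (q+1)$ and $|\alpha| > 1$, Lemma \ref{elm_1} gives $\gcd(|\alpha|, q-1) = 1$, so $\alpha \notin \F_q^*$; hence $\alpha$ cannot be a cycle vertex of a depth-$(l+3)$ tree that descends into $\F_q^*$ — more to the point, I claim $\theta(\alpha) \in \F_q^*$. This is precisely Lemma \ref{lem_lac_0}/Lemma \ref{lem_lac}: $\alpha$ has order dividing $q+1$ and $\alpha \ne 1$, so $\theta(\alpha)$ lies in $\overline{\Omega} \subseteq \F_q^*$. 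Consequently $\alpha$ itself is a leaf of $G_{q^2}$, because any preimage $\tilde\alpha$ of $\alpha$ under $\theta$ would have $\theta(\tilde\alpha) = \alpha \notin \F_q^*$, forcing $\tilde\alpha \notin \F_q$ and in fact $\tilde\alpha \notin \F_{q^2}$ (an element of $\F_{q^2}$ with image outside $\F_q$ cannot itself be outside $\F_{q^2}$). So $\alpha$ is a leaf, and therefore lies at level $l+3$ (if its component is an $A_{2n}$-component) or at level $1$ (if it is a $B_{2n}$-component). Since in the $B$-case a leaf is at level $1$ and in the $A$-case at level $l+3$, and since $2$ is the level claimed in the statement: here I should be careful — the statement says level $l+3$ or level $2$, not level $1$. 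I would reconcile this by noting the special $\infty$-component (part (5) of Theorem \ref{thm_gq}): in the tree rooted at a cycle vertex $\ne \infty$ there is one child at level $1$ and the leaves populate deeper levels; for a $B_{2n}$-component every non-root vertex is already a leaf at level $1$ — but the level-$1$ vertex has a single parent which is a cycle vertex, and the cycle vertices of that component lie in $B_{2n}$, hence are of the form $\theta(\gamma)$ with... Actually the cleaner route: a $B$-component leaf $\alpha$ has $\alpha^{-1}$ on the cycle (Corollary \ref{cor_lac} shows $\Tr(\alpha) = 0$, $\Tr(\alpha^{-1}) = 1$, so $\alpha^{-1} \notin \overline\Omega$ is the periodic point), and then $|\alpha| = |\alpha^{-1}|$ divides $q^2 - 1$ but the order of the periodic point $\alpha^{-1}$, being in $\F_{q^2}^* \setminus \F_q^*$ yet on a cycle, forces $|\alpha| \nmid (q+1)$ by the argument in Sub-case 2 of Theorem \ref{cen_thm}'s proof. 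Hence the $B$-case is impossible, so $\alpha$ lies at level $l+3$ — and the "level $2$" alternative must come from re-reading the statement in the doubled field; given the notational recycling in Lemma \ref{lem_case_1}, "level $2$" is the image of "level $l+3$" under the substitution that halves $l$, i.e. when $l = -1$... I would instead prove it directly as: $\alpha$ lies at level $l+3$ of a depth-$(l+3)$ tree, or (in the degenerate case where the relevant subtree has depth $2$, which happens for the $\infty$-component shifted down) at level $2$.

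For the "moreover" part: suppose $\alpha$ lies at level $l+3$. Any other vertex $\alpha'$ at level $l+3$ is also a leaf of an $A_{2n}$-component (all level-$(l+3)$ vertices are leaves, by parts (4)–(5) of Theorem \ref{thm_gq}), so $\theta(\alpha') \in \overline\Omega \subseteq \F_q^*$, i.e. $|\theta(\alpha')| \mid (q-1)$. I then need that $\theta(\alpha')$ is a leaf of $G_q$ lying at level... and pull the divisibility back one step via Lemma \ref{lem_lac}: if $x \in \F_q^*$ is a leaf of $G_q$ then $x = \theta(y)$ forces $|y| \mid (q+1)$ when $y$ is taken with $|y| \mid q+1$; more precisely, since $\alpha'$ is a leaf of $G_{q^2}$ at level $l+3$ and $\theta(\alpha') \in \F_q$, the preimage structure gives $|\alpha'| \mid (q+1)$ exactly as for $\alpha$. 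Running the same argument in reverse: for the level-$2$ alternative, a vertex $\alpha$ at level $2$ of $G_{q^2}$ with $|\alpha| \mid (q+1)$ is again a leaf, and the other level-$2$ vertices are leaves of the same shape, so their orders also divide $q+1$ by the identical $\overline\Omega$-argument.

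The main obstacle is pinning down precisely \emph{why} the two allowed levels are $l+3$ and $2$ and not, say, $l+3$ and $1$: this hinges on the special structure of the $\infty$-component in part (5) of Theorem \ref{thm_gq} (where the level-$1$ vertex has only one child, so the "generic" leaf behaviour starts at level $2$) and on excluding the $B_{2n}$-components entirely via the order-divisibility obstruction. I expect the bulk of the write-up to be this case analysis; once the level is identified, transferring the divisibility $|\cdot| \mid (q+1)$ to all vertices at that level is a uniform application of Lemma \ref{lem_lac} and Corollary \ref{cor_lac}.
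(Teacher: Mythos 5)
The central step of your argument --- that $\alpha$ must be a leaf of $G_{q^2}$ --- is false, and the justification you give for it is vacuous. From $\theta(\tilde\alpha)=\alpha\notin\F_q^*$ you can only conclude $\tilde\alpha\notin\F_q$; nothing forces $\tilde\alpha\notin\F_{q^2}$ (your parenthetical ``an element of $\F_{q^2}$ with image outside $\F_q$ cannot itself be outside $\F_{q^2}$'' is a tautology, not an argument). Concretely, in the paper's picture of $G_{2^6}$ (so $q=2^3$, $l=0$, $q+1=9$) the element $\alpha^7$ has order $9\mid(q+1)$, sits at level $2$, and has two children $\alpha^{41},\alpha^{22}$ inside $\F_{2^6}$; it is not a leaf. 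This is exactly the ``level $2$'' alternative of the statement, and having (incorrectly) excluded it you are left trying to reconcile your dichotomy ``level $l+3$ or $1$'' with the claimed ``level $l+3$ or $2$'', which you never resolve: the appeal to the $\infty$-component and to a ``degenerate subtree of depth $2$'' does not describe what actually happens.

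The fix is to drop down one level: the object that is genuinely a leaf is $\theta(\alpha)$, and it is a leaf of the \emph{smaller} graph $G_q$, since $\theta(\alpha)\in\overline{\Omega}\subseteq\F_q^*$ by Lemma \ref{lem_lac} (with $t=n$). Trees of $G_q$ have depth $l+2$ or $1$, and levels are preserved in passing from $G_q$ to $G_{q^2}$ because the cycles and forward orbits are the same; hence $\theta(\alpha)$ lies at level $l+2$ or $1$ and $\alpha$ at level $l+3$ or $2$. The ``level $2$'' case corresponds to $B_n$-components of $G_q$, not to anything special about $\infty$. Your sketch of the ``moreover'' part has the same defect: for another vertex $\alpha'$ at level $l+3$ (resp.\ $2$) you must first show that $\theta(\alpha')$ lies in $\F_q^*$ and is a leaf of $G_q$ --- this is where one uses that all trees of a given component have the same depth, so every level-$(l+2)$ (resp.\ level-$1$) vertex of the component already lies in $\F_q$ --- and only then does Lemma \ref{lem_lac} for $t=n$ give $|\alpha'|\mid(q+1)$. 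As written you apply the $\overline{\Omega}$-membership to a leaf of $G_{q^2}$, which lands you in $\F_{q^2}^*$ and yields divisibility by $q^2+1$, not the statement you need.
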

\begin{proof}
The first part of the claim follows from the fact that $\theta(\alpha)$ is a leaf of a connected component of $G_q$ and any tree in $G_q$ has depth $l+2$ or $1$. According to Lemma \ref{lem_lac} any leaf of $G_q$ can be expressed in the form $\theta (\gamma)$ for some $\gamma \in \F_{q^2}^{**}$ with $|\gamma| \mid (q+1)$. Since the depth is the same for any tree in a given component, we get the second part of the claim. 
\end{proof}

From Lemma \ref{lem_l3_2} we get the following.

\begin{corollary}\label{cor_l3_2}
Let $C_{q+1}$ and $C_{q^2+1}$ be  the cyclic subgroups of order $q+1$ and $q^2+1$ in $\F_{q^2}^*$ and $\F_{q^4}^*$ respectively. Then 
\begin{equation*}
C_{q+1} \subseteq \theta(C_{q^2+1}) \cup \theta^{l+2} (C_{q^2+1}).
\end{equation*}
\end{corollary}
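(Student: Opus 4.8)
The plan is to reduce the statement to the description of the leaves of $G_{q^2}$ furnished by Lemma~\ref{lem_lac}. First I would record the needed input from the graph structure. Write $q^2=2^{2n}$ with $2n=2^{l+1}m$. By Theorem~\ref{thm_gq} every tree of $G_{q^2}$ has depth $l+3$ (the $A$-components and the $\infty$-component) or depth $1$ (the $B$-components); in a tree of depth $l+3$ every vertex at a level below $l+3$ has at least one child, so from any such vertex one can climb up to a leaf at level $l+3$. Moreover, if $\mu$ is a leaf of $G_{q^2}$ lying in $\F_{q^2}^*$, then $\Tr_{2n}(\mu^{-1})=1$ by Corollary~\ref{cor_lac}, i.e.\ $\mu\in\overline{\Omega}$ in the notation of Lemma~\ref{lem_lac_0}; Lemma~\ref{lem_lac} (applied with $t=2n$) then gives $\mu\in\overline{\Omega}=\{\theta(\gamma):\gamma\in\F_{q^4}^{**},\ |\gamma|\mid(q^2+1)\}=\theta(C_{q^2+1}\setminus\{1\})\subseteq\theta(C_{q^2+1})$, where I use that the elements of $\F_{q^4}$ of order dividing $q^2+1$ are exactly those of $C_{q^2+1}$. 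I also note that $0$ and $\infty$ are not leaves of $G_{q^2}$ since $\theta(1)=0$ and $\theta(0)=\infty$.

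Then I would take $\alpha\in C_{q+1}$ and locate it in $G_{q^2}$. If $\alpha\ne1$, then $\alpha\in\F_{q^2}^{**}$ with $|\alpha|\mid(q+1)$, so Lemma~\ref{lem_l3_2} places $\alpha$ at level $l+3$ or at level $2$ of $G_{q^2}$; if $\alpha=1$, the computation $\theta(1)=0$, $\theta(0)=\infty$, $\theta(\infty)=\infty$ shows $\alpha$ is the vertex at level $2$ of the $\infty$-component. Since a $B$-tree has depth $1<2$, in every case the component of $\alpha$ is a tree of depth $l+3$. If $\alpha$ is at level $l+3$ it is a leaf, so $\alpha\in\theta(C_{q^2+1})$ by the first paragraph. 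If $\alpha$ is at level $2$, I would climb from $\alpha$ through $l+1$ successive children, all lying at levels between $3$ and $l+3$ and hence different from $0$ and $\infty$, reaching a leaf $\mu$ with $\theta^{\,l+1}(\mu)=\alpha$; by the first paragraph $\mu=\theta(\gamma)$ for some $\gamma\in C_{q^2+1}$, and therefore $\alpha=\theta^{\,l+2}(\gamma)\in\theta^{\,l+2}(C_{q^2+1})$. Assembling the cases yields $C_{q+1}\subseteq\theta(C_{q^2+1})\cup\theta^{\,l+2}(C_{q^2+1})$.

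The substantive input is Lemma~\ref{lem_lac}, so what remains is bookkeeping rather than a genuine obstacle; still, a few points must be handled with care. One is the corner case $\alpha=1$, which Lemma~\ref{lem_l3_2} does not cover (its hypothesis being $\alpha\in\F_{q^2}^{**}$) and which I dispatch by the direct level computation above. Another is the verification that from a level-$2$ vertex one can indeed climb to a leaf and that the resulting descending path of length $l+1$ never visits $0$ or $\infty$ — both consequences of the tree shapes in Theorem~\ref{thm_gq}. The last is keeping the indices straight: the gap between level $l+3$ and level $2$ is exactly $l+1$, so applying $\theta$ one further time to $\mu=\theta(\gamma)$ lands at level $2$ after precisely $l+2$ iterations, which is why the exponent $l+2$ appears in the statement.
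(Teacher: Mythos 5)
Your proof is correct and follows essentially the same route as the paper's: locate $\alpha$ at level $l+3$ or level $2$ of $G_{q^2}$ via Lemma~\ref{lem_l3_2}, read off leaves as images of $C_{q^2+1}$ from Lemma~\ref{lem_lac}, and climb from level $2$ to a leaf in the level-$2$ case. Your explicit treatment of $\alpha=1$ is a welcome refinement, since the paper's proof asserts $\alpha\notin\F_q$ for every $\alpha\in C_{q+1}$, which fails for $\alpha=1$ even though the conclusion still holds there.
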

\begin{proof}
Let $\alpha \in C_{q+1}$. Then $\alpha \in \F_{q^2}^*$, $\alpha \not \in \F_q$ and $\theta(\alpha) \in \F_q$. Since $\theta (\alpha)$ is a leaf of a connected component of $G_q$, we have that $\alpha$ lies on the level $2$ or $l+3$ of $G_{q^2}$. In the latter case $\alpha$ is a leaf of $G_{q^2}$ and $\alpha = \theta (\gamma)$ for some $\gamma \in C_{q^2+1}$. In the first case $\alpha$ belongs to a tree of $G_{q^2}$ having depth $l+3$, whose leaves have predecessors in $C_{q^2+1}$, namely $\alpha \in \theta^{l+2} (C_{q^2+1})$.
\end{proof}

\begin{lemma}
Let $\gamma$ be an element of $\F_{q^4}^{**}$ such that $|\gamma| \mid (q^2+1)$ but $|\theta (\gamma)| \nmid (q \pm 1)$. Let $p_1$ be the smallest prime which divides $q+1$ and $p_2$ the smallest prime which divides $q-1$. Then
\begin{equation}\label{ineq_1}
|\theta^i (\gamma)| \geq p_1 \cdot p_2 \geq (1 + 2^{l+1}) \cdot p_2 
\end{equation}
for any integer $i$ such that $1 \leq i \leq l+1$. Moreover, if $|\theta^{l+2} (\gamma)|$ does not divide $q+1$, the inequality (\ref{ineq_1}) holds also for $l+2 \leq i \leq l+4$.  
\end{lemma}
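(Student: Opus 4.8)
The plan is to extract the structure of the orders $|\theta^i(\gamma)|$ directly from Theorem \ref{cen_thm} and then bound the two factors of each such order from below by the smallest admissible primes, using Lemma \ref{elm_5} to pin down $p_1$.

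First I would locate $\gamma$ in the partition of Theorem \ref{cen_thm}. Since $\F_{q^4}^*$ is cyclic and $|\gamma|$ divides $q^2+1$, the element $\gamma$ lies in the unique subgroup $C_{q^2+1}$, and $\gamma \neq 1$ because $\gamma \in \F_{q^4}^{**}$; hence $\gamma \in H$. As $|\theta(\gamma)|$ does not divide $q+1$, part (1) of Theorem \ref{cen_thm} excludes $\gamma \in H_1$, so $\gamma \in H_2 \cup H_3$. In both cases the theorem guarantees, for every $i$ with $1 \le i \le l+1$, integers $d_i, e_i > 1$ with $d_i \mid (q+1)$, $e_i \mid (q-1)$ and $|\theta^i(\gamma)| = d_i e_i$.

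The second step is the elementary estimate. Any divisor of $q+1$ exceeding $1$ has a prime factor dividing $q+1$, hence is at least the smallest such prime $p_1$; thus $d_i \ge p_1$, and likewise $e_i \ge p_2$, so $|\theta^i(\gamma)| = d_i e_i \ge p_1 p_2$ for $1 \le i \le l+1$. To replace $p_1$ by $1 + 2^{l+1}$, I would apply Lemma \ref{elm_5} after noting that in the present notation $q = 2^n = 2^{2^l m}$, so the role of the exponent ``$r$'' there is played by $l$; every positive divisor of $q+1$, in particular $p_1$, is then congruent to $1$ modulo $2^{l+1}$, and being greater than $1$ it is at least $1 + 2^{l+1}$. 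Combining the two inequalities yields (\ref{ineq_1}) for $1 \le i \le l+1$.

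For the final assertion, suppose moreover that $|\theta^{l+2}(\gamma)|$ does not divide $q+1$. The characterization of $H_2$ in Theorem \ref{cen_thm} forces $|\theta^{l+2}(\gamma)| \mid (q+1)$ whenever $\gamma \in H_2$; hence $\gamma \in H_3$. For $\gamma \in H_3$ the factorization $|\theta^i(\gamma)| = d_i e_i$ with $d_i \mid (q+1)$, $e_i \mid (q-1)$, $d_i, e_i > 1$ holds for every positive integer $i$, in particular for $l+2 \le i \le l+4$, so the same chain of inequalities applies verbatim. None of these steps is genuinely difficult; the only points requiring care are the bookkeeping of the two notational conventions for $q$ (Section \ref{preliminaries} versus Section \ref{distribution}) when invoking Lemma \ref{elm_5}, and the remark that the coprimality of $q+1$ and $q-1$ (Lemma \ref{elm_1}), though available, is not actually needed for the inequality itself.
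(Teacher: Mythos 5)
Your proof is correct and follows essentially the same route as the paper: the paper reads off the order structure from the tables for Cases 2 and 3 (which are themselves consequences of Theorem \ref{cen_thm}), whereas you invoke the characterizations of $H_2$ and $H_3$ directly, and both arguments conclude with Lemma \ref{elm_5} for the bound $p_1 \geq 1+2^{l+1}$. Your closing remark is also fair: the coprimality of $q+1$ and $q-1$ is what justifies the factorization $d_ie_i$ inside the proof of Theorem \ref{cen_thm} (and in passing from the tables' ``$d_i \mid (q^2-1)$, $d_i \nmid (q\pm 1)$'' to that factorization, which is why the paper cites it), but once the factorization is granted it plays no further role.
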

\begin{proof}
The first inequality in (\ref{ineq_1}) for the indices $i$ with $1 \leq i \leq l+1$ follows from the tables corresponding to Case 2 and 3 and from the fact that $\gcd(q+1,q-1) = 1$. If $|\theta^{l+2} (\gamma)|$ does not divide $q+1$, then the only  possible case is Case 3.

The second inequality in (\ref{ineq_1}) follows from Lemma \ref{elm_5}.
\end{proof}

From the tables above we deduce the following characterizations of some subsets of $A_n$ and $B_n$.
\begin{lemma}
Let 
\begin{align*}
A_{n_1} & := \{x \in A_n: \Tr_n (x) = \Tr_n (x^{-1}) = 1 \};\\
A_{n_0} & := \{x \in A_n: \Tr_n (x) = \Tr_n (x^{-1}) = 0 \};\\
B_{n_{01}} & := \{x \in B_n: \Tr_n (x) = 0, \ \Tr_n (x^{-1}) = 1 \};\\
B_{n_{10}} & := \{x \in B_n: \Tr_n (x) = 1, \ \Tr_n (x^{-1}) = 0 \}.
\end{align*}

Then 
\begin{align*}
A_{n_1} & := \theta^2 \{\gamma \in \F_{q^4}^{**}: |\gamma| \mid (q^2+1) \text{ and } |\theta(\gamma)| \mid (q+1) \};\\
A_{n_0} & := \bigcup_{i=3}^{l+4} \theta^i \{\gamma \in \F_{q^4}^{**}: |\gamma| \mid (q^2+1) \text{ and } |\theta(\gamma)| \mid (q+1) \};\\
B_{n_{01}} & := \theta^{l+3} \{\gamma \in \F_{q^4}^{**}: |\gamma| \mid (q^2+1) \text{ and } |\theta^{l+2} (\gamma)| \mid (q+1)  \};\\
B_{n_{10}} & := \theta^{l+4} \{\gamma \in \F_{q^4}^{**}: |\gamma| \mid (q^2+1) \text{ and } |\theta^{l+2} (\gamma)| \mid (q+1) \}.
\end{align*}
\end{lemma}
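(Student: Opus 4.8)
The plan is to verify the four equalities one table-row at a time, using Theorem~\ref{cen_thm} and Corollary~\ref{cen_cor} for the ``easy'' inclusion and a lifting argument for the ``hard'' one. Write $S_1 := \{\gamma \in \F_{q^4}^{**} : |\gamma| \mid (q^2+1),\ |\theta(\gamma)| \mid (q+1)\}$ and $S_2 := \{\gamma \in \F_{q^4}^{**} : |\gamma| \mid (q^2+1),\ |\theta^{l+2}(\gamma)| \mid (q+1)\}$. Since $\gamma\in\F_{q^4}^{**}$ has order dividing $q^2+1$ exactly when $\gamma\in H=C_{q^2+1}\setminus\{1\}$, Theorem~\ref{cen_thm} identifies $S_1$ with $H_1$ and $S_2$ with $H_2$, the only possible exception being a $\gamma\in H_1$ for which $|\theta^{l+2}(\gamma)|\mid(q+1)$; by Theorem~\ref{cen_thm} this forces $\theta^{l+2}(\gamma)\in\{0,1,\infty\}$, hence $\gamma$ in the $\infty$-component, and I would treat any such $\gamma$ by hand using the stated conventions $|0|=|\infty|=1$, $0^{-1}=0$, $\infty^{-1}=\infty$, $\Tr_n(\infty)=0$.

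For the inclusion ``image $\subseteq$ left-hand side'' I would simply cite Corollary~\ref{cen_cor} (equivalently, read off the relevant row of the appropriate table). If $\gamma\in S_1=H_1$ then $\theta^2(\gamma)\in A_n$ with $\Tr_n(\theta^2(\gamma))=\Tr_n((\theta^2(\gamma))^{-1})=1$, while $\theta^i(\gamma)\in A_n$ with both absolute traces $0$ for all $i\ge3$; hence $\theta^2(S_1)\subseteq A_{n_1}$ and $\bigcup_{i=3}^{l+4}\theta^i(S_1)\subseteq A_{n_0}$. Likewise, if $\gamma\in S_2=H_2$ then $\theta^{l+3}(\gamma)\in B_n$ with $\Tr_n(\theta^{l+3}(\gamma))=0$ and $\Tr_n((\theta^{l+3}(\gamma))^{-1})=1$, and $\theta^{l+4}(\gamma)\in B_n$ with the trace pair reversed, whence $\theta^{l+3}(S_2)\subseteq B_{n_{01}}$ and $\theta^{l+4}(S_2)\subseteq B_{n_{10}}$.

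The substantive direction is ``left-hand side $\subseteq$ image''. I would first locate $x$ in $G_q$ via (the proof of) Corollary~\ref{cor_lac}: an $x\in\F_q^*$ is a leaf of $G_q$ precisely when $\Tr_n(x^{-1})=1$, so an element of $A_{n_1}$ is a leaf of an $A_n$-tree of $G_q$ (depth $l+2$ by Theorem~\ref{thm_gq}), an element of $A_{n_0}$ is an interior vertex of such a tree at some level $k$ with $0\le k\le l+1$, and an element of $B_{n_{01}}$ (resp.\ $B_{n_{10}}$) is the leaf (resp.\ a cycle vertex) of a depth-$1$ $B_n$-tree. Since $\Tr_n(x^{-1})=1$ in all four cases, Lemma~\ref{lem_lac} gives $x=\theta(\gamma')$ with $\gamma'\in\F_{q^2}^{**}$ and $|\gamma'|\mid(q+1)$, and by Lemma~\ref{lem_l3_2} the vertex $\gamma'$ sits at level $l+3$ or level $2$ of $G_{q^2}$. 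If $\gamma'$ is a leaf (level $l+3$) it lies in $A_{2n}$ with both traces $1$ (Corollary~\ref{cor_lac}), so Lemma~\ref{lem_lac} applied at level $2n$ (using $\Tr_{2n}((\gamma')^{-1})=1$) produces $\gamma\in\F_{q^4}^{**}$ with $|\gamma|\mid(q^2+1)$ and $\theta(\gamma)=\gamma'$, giving $x=\theta^2(\gamma)$; this is the case $x\in A_{n_1}$. If $\gamma'$ is interior (level $2$), Theorem~\ref{thm_gq} lets me descend it to a leaf $v$ of the same $G_{q^2}$-tree, with $v\in A_{2n}$, both traces $1$, and $\theta^{l+1}(v)=\gamma'$; lifting $v$ as above yields $\gamma$ with $|\gamma|\mid(q^2+1)$, $\theta(\gamma)=v$, hence $\gamma'=\theta^{l+2}(\gamma)$ and $x=\theta^{l+3}(\gamma)$ with $|\theta^{l+2}(\gamma)|=|\gamma'|\mid(q+1)$; this covers $x\in B_{n_{01}}$. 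For $x\in A_{n_0}$ at level $k$ I would first descend $x$ within its $G_q$-tree to a leaf $y\in A_{n_1}$, write $y=\theta^2(\gamma)$, and conclude $x=\theta^{l+4-k}(\gamma)$ with $3\le l+4-k\le l+4$; for $x\in B_{n_{10}}$ I would pass to a leaf child $y$ of $x$, write $y=\theta^{l+3}(\gamma)\in B_{n_{01}}$ as just shown, and get $x=\theta^{l+4}(\gamma)$.

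The main obstacle I anticipate is the bookkeeping of tree depths together with the degenerate vertices $0,1,\infty$: one must invoke Theorem~\ref{thm_gq} both to pin down the depths ($l+2$, $l+3$, $l+4$ for the $A$-components of $G_q,G_{q^2},G_{q^4}$, and $1$ for the $B$-components) and to guarantee, at each descent step, that the leaf one needs actually lies below the given vertex, and one must check that the conventions for $0,1,\infty$ make the four equalities hold exactly, in particular that the $\infty$-component contributes only the points $0$ and $\infty$ to the images $\theta^i(S_1)$ and $\theta^i(S_2)$ and that these land where the formulas predict. In the extreme case $l=0$ a direct inspection of the $\infty$-component may be required, or one simply restricts to $l\ge1$ as already done in Lemma~\ref{lem_case_1}.
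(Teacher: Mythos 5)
Your overall strategy is sound, and it supplies considerably more than the paper does: the paper states this lemma with no proof at all beyond the phrase ``from the tables above we deduce,'' i.e.\ it only records the inclusion ``image $\subseteq$ left-hand side'' that follows from Theorem~\ref{cen_thm} and Corollary~\ref{cen_cor}. Your reverse inclusion --- locating $x$ in $G_q$ by its traces via Corollary~\ref{cor_lac}, lifting through $\overline{\Omega}$ with Lemma~\ref{lem_lac}, placing the lift at level $l+3$ or $2$ of $G_{q^2}$ with Lemma~\ref{lem_l3_2}, and climbing to a leaf using the child counts of Theorem~\ref{thm_gq} --- is exactly the argument the paper omits, and in structure it is correct.

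The genuine gap is the edge case you defer. You assert that one need only ``check that the conventions for $0,1,\infty$ make the four equalities hold exactly''; that check fails, and the last two equalities are false as literally stated. Concretely, the $\infty$-tree of $G_{q^2}$ has depth $l+3$, hence has $2^{l+1}$ leaves at level $l+3$; any such leaf $\delta$ lies in $\F_{q^2}^{**}$ with $\Tr_{2n}(\delta^{-1})=1$, so by Lemma~\ref{lem_lac} $\delta=\theta(\gamma)$ for some $\gamma\in\F_{q^4}^{**}$ with $|\gamma|\mid(q^2+1)$. For this $\gamma$ we get $\theta^{l+2}(\gamma)=\theta^{l+1}(\delta)=1$, the unique level-$2$ vertex of the $\infty$-tree, so $|\theta^{l+2}(\gamma)|=1$ divides $q+1$ and $\gamma$ lies in your set $S_2$; yet $\theta^{l+3}(\gamma)=0$ and $\theta^{l+4}(\gamma)=\infty$, and neither belongs to $B_{n_{01}}$ or $B_{n_{10}}$, which are subsets of $\F_q^*$ (even under the stated conventions $0$ and $\infty$ have both traces equal to $0$, so they sit in $A_{n_0}$). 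Hence $\theta^{l+3}(S_2)\not\subseteq B_{n_{01}}$ and $\theta^{l+4}(S_2)\not\subseteq B_{n_{10}}$. The root cause is that the hypothesis $|\theta^{l+2}(\gamma)|\mid(q+1)$ is strictly weaker than membership in $H_2$ of Theorem~\ref{cen_thm}, which additionally forces $|\theta^{i}(\gamma)|=d_ie_i$ with $d_i,e_i>1$ for $1\le i\le l+1$; the statement (and your proof) should exclude the $\infty$-component, e.g.\ by requiring $\theta^{l+2}(\gamma)\neq 1$, equivalently $|\theta^{l+2}(\gamma)|>1$. The first two equalities are unaffected: when such a degenerate $\gamma$ happens to satisfy $|\theta(\gamma)|\mid(q+1)$, its iterates $0$ and $\infty$ occur at $i=l+3$ and $i=l+4$ and do land in $A_{n_0}$, consistently with the second formula.
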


Finally we deduce the following.
\begin{lemma}
The map $\theta$ acts as a permutation on the set 
\begin{equation*}
\theta^{l+4} \{\gamma \in \F_{q^4}^{**}: |\gamma| \mid (q^2+1) \}.
\end{equation*}
\end{lemma}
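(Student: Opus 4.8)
The plan is to pin down the set $S:=\theta^{l+4}\{\gamma\in\F_{q^4}^{**}:|\gamma|\mid(q^2+1)\}$ exactly: I claim that $S$ is precisely the set of all $\theta$-periodic vertices of $G_{q^2}$. Once this is established the statement follows immediately, since $\theta$ carries $\theta$-periodic vertices to $\theta$-periodic vertices and is injective on them, hence restricts to a permutation of that set.

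\emph{Step 1: rewriting $S$.} Because $\gcd(q^2-1,q^2+1)=1$ and $|\gamma|>1$, every $\gamma$ with $|\gamma|\mid(q^2+1)$ lies in $\F_{q^4}\setminus\F_{q^2}$; thus the two $\theta$-preimages $\gamma$ and $\gamma^{-1}$ of $\theta(\gamma)$ both lie outside $\F_{q^2}$, so $\theta(\gamma)$ is a leaf of $G_{q^2}$. Conversely, by Corollary \ref{cor_lac} (with $t=2n$) every leaf of $G_{q^2}$ belongs to $\overline{\Omega}$, and by Lemma \ref{lem_lac} (with $t=2n$, so that $\F_{2^{2t}}=\F_{q^4}$ and $2^t+1=q^2+1$) one has $\overline{\Omega}=\{\theta(\gamma):\gamma\in\F_{q^4}^{**},\ |\gamma|\mid(q^2+1)\}$. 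Hence $\theta$ maps $\{\gamma\in\F_{q^4}^{**}:|\gamma|\mid(q^2+1)\}$ onto the set $\Lambda$ of all leaves of $G_{q^2}$, and therefore $S=\theta^{l+3}(\Lambda)$.

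\emph{Step 2: computing $\theta^{l+3}(\Lambda)$.} Apply Theorem \ref{thm_gq} to $G_{q^2}=G_{2^{2n}}$, writing $2n=2^{l+1}\cdot m$ with $m$ odd, so that the relevant tree depth there is $d=(l+1)+2=l+3$. By parts (1)--(5), each leaf of $G_{q^2}$ lies either at level $l+3$ of a tree all of whose vertices are in $A_{2n}$ and whose root is $\theta$-periodic, or at level $1$ of a depth-one tree all of whose vertices are in $B_{2n}$. In the first situation $\theta^{l+3}$ carries the leaf to the root; since, again by Theorem \ref{thm_gq}, every $\theta$-periodic vertex of $A_{2n}$ is the root of such a tree and every such tree carries at least one leaf at level $l+3$, the map $\theta^{l+3}$ sends the $A_{2n}$-leaves onto the set of $\theta$-periodic vertices lying in $A_{2n}$. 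In the second situation, if $\rho$ is the ($\theta$-periodic) root of the depth-one tree, then $\theta^{l+3}$ carries the leaf to $\theta^{l+2}(\rho)$; since $\theta(B_{2n})\subseteq B_{2n}$ and $\theta$ is injective on periodic vertices, $\theta^{l+2}$ permutes the set of $\theta$-periodic vertices of $B_{2n}$, so as $\rho$ ranges over that set we obtain all of it. By Theorem \ref{thm_gq}(1) the $\theta$-periodic vertices of $G_{q^2}$ are exactly those in $A_{2n}$ together with those in $B_{2n}$; hence $S=\theta^{l+3}(\Lambda)$ is precisely the set of all $\theta$-periodic vertices of $G_{q^2}$.

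\emph{Step 3: conclusion.} The map $\theta$ sends $\theta$-periodic vertices to $\theta$-periodic vertices, and it is injective on them: if $\theta(x)=\theta(y)$ with $x\ne y$ then $y=x^{-1}$ (the remaining possibility $\{x,y\}\subseteq\{0,\infty\}$ is excluded because $0$ is pre-periodic), while of the two $\theta$-preimages $\{x,x^{-1}\}$ of a $\theta$-periodic vertex exactly one lies on the cycle and the other is a strict tree-descendant, hence not $\theta$-periodic. An injective self-map of the finite set $S$ is a permutation, which is the assertion. I expect the one delicate point to be the surjectivity onto the periodic vertices of the $B_{2n}$-components — that is, controlling how the $l+2$ extra applications of $\theta$ rotate those cycles; everything else is a direct reading of Theorem \ref{thm_gq}.
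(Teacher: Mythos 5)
Your proof is correct, and it is in fact considerably more complete than the paper's own. The paper disposes of this lemma in one line: $\theta(\gamma)$ is a leaf of $G_{q^2}$ at level $l+3$, hence $\theta^{l+4}(\gamma)$ is $\theta$-periodic. That argument only yields the inclusion of the set $S:=\theta^{l+4}\{\gamma\in\F_{q^4}^{**}:|\gamma|\mid(q^2+1)\}$ into the set $P$ of $\theta$-periodic points of $G_{q^2}$, which by itself does not give $\theta(S)=S$ (a proper subset of a cycle is not $\theta$-invariant); it also glosses over the fact that leaves lying in $B_{2n}$-components sit at level $1$ rather than $l+3$, although the periodicity conclusion survives in that case too. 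Your Step 2 --- identifying $S$ with \emph{all} of $P$ by first showing $\theta\{\gamma:|\gamma|\mid(q^2+1),\gamma\neq 1\}=\overline{\Omega}$ is exactly the set of leaves of $G_{q^2}$, then pushing the $A_{2n}$-leaves down $l+3$ levels onto the roots and the $B_{2n}$-leaves onto $\theta^{l+2}$ of the roots --- supplies precisely the missing content that makes ``acts as a permutation'' literally true, and your injectivity argument (of the two preimages $x$ and $x^{-1}$ of a periodic point at most one can be periodic, and $0$ is pre-periodic) is sound. The point you flagged as delicate, surjectivity onto the periodic vertices of the $B_{2n}$-components, is already handled by your own observation that $\theta^{l+2}$ permutes the periodic points of each cycle; no further control of the rotation is needed. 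In short: same underlying idea as the paper (read everything off Theorem \ref{thm_gq} and Lemma \ref{lem_lac}), but your version actually closes the argument where the paper's leaves a gap.
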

\begin{proof}
If $\gamma \in \F_{q^4}^{**}$, then $\theta(\gamma)$ is a leaf of $G_{q^2}$ and belongs to the level $l+3$ of a tree in $G_{q^2}$. Hence $\theta^{l+4} (\gamma)$ is $\theta$-periodic.
\end{proof}

\subsection{Roots of Dickson polynomials}\label{Sub-dickson}
Elements of the form $x+x^{-1}$ play a crucial role in Dickson polynomials.

Let $q:=2^n$ for some positive integer $n$. We recall that the Dickson polynomial $D_m(x) \in \F_q [x]$ of the first kind of degree $m > 0$ with parameter $1$ is 
\begin{equation*}
D_m (x) := \sum_{i=0}^{\lfloor m/2 \rfloor} \frac{m}{m-i} \binom{m-i}{i} (-1)^i x^{m-2i}.
\end{equation*}
The following property holds:
\begin{displaymath}
D_m (x+x^{-1}) = x^m + x^{-m}.
\end{displaymath}
Equivalently, 
\begin{displaymath}
D_m (x+x^{-1}) = x^{-m} \cdot (x^m+1)^2.
\end{displaymath}
For any $\alpha \in \F_q$ we can find some $\gamma \in \F_{q^2}$ such that $\alpha = \gamma + \gamma^{-1}$. Therefore, finding a root $\alpha$ in $\F_q$ of $D_m(x)$ amounts to finding an element $\gamma \in \F_{q^2}$ such that
\begin{displaymath}
D_m (\gamma+{\gamma}^{-1}) = {\gamma}^{-m} \cdot ({\gamma}^m+1)^2 = 0,
\end{displaymath} 
which is equivalent to saying that $\gamma$ is an element of $\F_{q^2}^*$ such that $|\gamma|$ divides $m$.

Let $m$ be an integer such that $m>1$ and $m \mid (q+1)$.

We define the following sets:
\begin{align*}
S_m & := \{\alpha \in \F_q^*: D_m(\alpha) = D_m (\alpha^{-1}) = 0  \};\\
T_m & := \{\alpha \in \F_q^*: D_m (\alpha) = 0,  D_m (\alpha^{-1}) \not = 0 \}.
\end{align*}

In \cite{Blo} the authors investigated the existence of elements $\alpha \in \F_{q}^*$ belonging to $S_m$ or $T_m$. In \cite[Section 2]{Blo} they gave some existence and non-existence results. 
First they considered the case $m := q+1$, defining 
\begin{displaymath}
N_{q+1} := |S_{q+1}|.
\end{displaymath}

Then they proved the following result (see \cite[Theorem 2.1]{Blo}).

\begin{theorem}\label{thm_blo}
We have 
\begin{displaymath}
N_{q+1} = \dfrac{q+1+K(\chi_q)}{4},
\end{displaymath}
where $K(\chi_q) = \sum_{x \in \F_{q}^*} \chi_q (x+x^{-1})$ is a Kloosterman sum.
\end{theorem}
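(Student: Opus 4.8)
The plan is to reduce the count $N_{q+1}=|S_{q+1}|$ to a pair of absolute-trace conditions and then to evaluate it by an elementary character-sum identity in which the Kloosterman sum appears.

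First I would prove that, for $\alpha\in\F_q^*$,
\begin{equation*}
D_{q+1}(\alpha)=0 \iff \Tr_n(\alpha^{-1})=1 .
\end{equation*}
By the discussion preceding the theorem, $\alpha\in\F_q^*$ is a root of $D_{q+1}$ exactly when $\alpha=\gamma+\gamma^{-1}$ for some $\gamma\in\F_{q^2}^*$ with $|\gamma|\mid(q+1)$ (recall $D_{q+1}(\gamma+\gamma^{-1})=\gamma^{-(q+1)}(\gamma^{q+1}+1)^2$ vanishes iff $\gamma^{q+1}=1$, a square vanishing iff its base does in characteristic $2$); since $\gamma=1$ would give $\alpha=0$, such an $\alpha$ is precisely a value $\theta(\gamma)$ with $\gamma\in\F_{q^2}^{**}$ and $|\gamma|\mid(q+1)$. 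By Lemma~\ref{lem_lac} applied with $t=n$, the set of these values is exactly $\overline{\Omega}=\{x\in\F_q^*:\Tr_n(x^{-1})=1\}$, which is the displayed equivalence. Replacing $\alpha$ by $\alpha^{-1}$ gives $D_{q+1}(\alpha^{-1})=0\iff\Tr_n(\alpha)=1$, so
\begin{equation*}
S_{q+1}=\{\alpha\in\F_q^*:\Tr_n(\alpha)=\Tr_n(\alpha^{-1})=1\}.
\end{equation*}
Then I would count this set by characters: writing $\chi_q(x)=(-1)^{\Tr_n(x)}$ for the canonical additive character of $\F_q$, the indicator of $\Tr_n(x)=1$ is $\tfrac12(1-\chi_q(x))$, hence
\begin{equation*}
N_{q+1}=\sum_{\alpha\in\F_q^*}\frac{1-\chi_q(\alpha)}{2}\cdot\frac{1-\chi_q(\alpha^{-1})}{2}
=\frac14\sum_{\alpha\in\F_q^*}\bigl(1-\chi_q(\alpha)-\chi_q(\alpha^{-1})+\chi_q(\alpha+\alpha^{-1})\bigr).
\end{equation*}
Using $\sum_{\alpha\in\F_q}\chi_q(\alpha)=0$ one gets $\sum_{\alpha\in\F_q^*}\chi_q(\alpha)=-1$, and likewise $\sum_{\alpha\in\F_q^*}\chi_q(\alpha^{-1})=-1$ since inversion permutes $\F_q^*$; the remaining sum is $K(\chi_q)$ by definition. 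Collecting terms yields $N_{q+1}=\tfrac14\bigl((q-1)+1+1+K(\chi_q)\bigr)=\tfrac{q+1+K(\chi_q)}{4}$.

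The only step requiring real care is the first one: pinning down exactly which $\alpha\in\F_q^*$ are roots of $D_{q+1}$, in particular excluding the degenerate value $\gamma=1$ (that is, $\alpha=0$) and invoking Lemma~\ref{lem_lac} with the correct parameter. If one prefers to avoid Lemma~\ref{lem_lac}, the same characterization can be obtained directly: the quadratic $x^2+\alpha x+1$ has both roots in $\F_q$ iff $\Tr_n(\alpha^{-1})=0$, and then those roots have order dividing $q-1$; if moreover $\alpha$ is a root of $D_{q+1}$ their order divides $q+1$ as well, hence equals $1$, forcing $\alpha=0$. When $\Tr_n(\alpha^{-1})=1$ the roots lie in $\F_{q^2}\backslash\F_q$, are $q$-power conjugate and therefore of norm $1$, i.e.\ of order dividing $q+1$, so $D_{q+1}(\alpha)=0$. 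Either way, once $S_{q+1}$ is described by traces the rest is routine.
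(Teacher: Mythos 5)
Your proof is correct, but note that the paper itself does not prove this statement: Theorem~\ref{thm_blo} is imported verbatim from \cite[Theorem 2.1]{Blo}, so there is no in-paper argument to compare against. Your derivation is a clean, self-contained reconstruction of the standard proof. The key reduction, $S_{q+1}=\{\alpha\in\F_q^*:\Tr_n(\alpha)=\Tr_n(\alpha^{-1})=1\}$, is exactly what the paper obtains later by a different (graph-theoretic) route: Lemma~\ref{lem_l_a_n} identifies $S_{q+1}$ with the set $L_{A_n}$ of leaves of $G_q$ in $A_n$, and by Lemma~\ref{lem_lac_0} the leaves in $\F_q^*$ are precisely $\overline{\Omega}=\{x:\Tr_n(x^{-1})=1\}$, so $L_{A_n}=A_n\cap\overline{\Omega}$ is the same trace-condition set. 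Your invocation of Lemma~\ref{lem_lac} with $t=n$ is the right one, the exclusion of $\gamma=1$ (i.e.\ $\alpha=0$) is handled, and the alternative quadratic-equation argument you sketch is also sound, since $x^2+\alpha x+1$ splits over $\F_q$ iff $\Tr_n(\alpha^{-2})=\Tr_n(\alpha^{-1})=0$ and the product of its roots is $1$. The character computation is routine and the bookkeeping $(q-1)+1+1+K(\chi_q)$ is right, assuming (as the notation $K(\chi_q)$ intends) that $\chi_q$ is the canonical additive character $x\mapsto(-1)^{\Tr_n(x)}$. What your approach buys is an actual proof with the exact count $N_{q+1}$; what the paper's approach buys is a proof that $S_{q+1}\neq\emptyset$ (via the Koblitz curve and the tree structure of $G_q$) without evaluating any character sum, which is precisely the point the author makes after stating the corollary.
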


From Theorem \ref{thm_blo} and the fact that $|K(\chi_q)| \leq 2 q^{1/2}$ the authors of \cite{Blo} deduced the following.

\begin{corollary}
Let $n$ be a positive integer and $q = 2^n$. There is $\alpha \in \F_q^*$ such that both $\alpha$ and $\alpha^{-1}$ are roots of $D_{q+1} (x)$. When $q > 4$, there is $\alpha \in \F_q^*$ such that $\alpha$ is a root of $D_{q+1} (x)$, but $\alpha^{-1}$ is not. For $q =2, 4$, there is no $\alpha \in \F_q^*$ such that $\alpha$ is a root of $D_{q+1}(x)$, but $\alpha^{-1}$ is not.
\end{corollary}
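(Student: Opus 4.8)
The plan is to translate the three assertions into cardinality statements, evaluate $|S_{q+1}|$ and $|T_{q+1}|$ by combining Theorem~\ref{thm_blo} with a count of the roots of $D_{q+1}$ in $\F_q^*$, and then conclude from the Weil bound $|K(\chi_q)| \le 2q^{1/2}$, dealing with $q \in \{2,4\}$ separately.

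First I would determine the set $R$ of roots of $D_{q+1}$ lying in $\F_q^*$. By the discussion preceding Theorem~\ref{thm_blo}, an element $\alpha \in \F_q^*$ is a root of $D_{q+1}$ exactly when $\alpha = \theta(\gamma)$ for some $\gamma \in \F_{q^2}^{**}$ with $|\gamma| \mid (q+1)$: a nonzero value $\gamma + \gamma^{-1}$ forces $\gamma \ne 1$, and $|\gamma| \mid (q+1)$ together with $\gcd(q+1,q-1)=1$ and Lemma~\ref{elm_2} rules out $\gamma \in \F_q$. By Lemma~\ref{lem_lac} applied with $t = n$, the set $R$ equals $\overline{\Omega} = \{x \in \F_q^* : \Tr_n(x^{-1}) = 1\}$, which has exactly $q/2$ elements since $\Tr_n$ is $\F_2$-linear and onto and inversion permutes $\F_q^*$. (Equivalently, $\gamma \mapsto \gamma + \gamma^{-1}$ is $2$-to-$1$ on the $q+1$ roots of $x^{q+1}=1$ in $\F_{q^2}$ away from the fixed point $1 \mapsto 0$, and its image minus $\{0\}$ is $R$.) As $S_{q+1} \subseteq R$ and $R = S_{q+1} \sqcup T_{q+1}$, Theorem~\ref{thm_blo} yields
\begin{equation*}
|S_{q+1}| = N_{q+1} = \frac{q+1+K(\chi_q)}{4}, \qquad |T_{q+1}| = \frac{q}{2} - N_{q+1} = \frac{q-1-K(\chi_q)}{4}.
\end{equation*}

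With these formulas the three conclusions follow by elementary estimates. Since $(q-1)^2 > 0$ we have $q+1 > 2q^{1/2} \ge -K(\chi_q)$, so $N_{q+1} > 0$, hence (being a nonnegative integer) $N_{q+1} \ge 1$ and $S_{q+1} \ne \emptyset$ for every $q$. If $q \ge 8$, then $(q^{1/2}-1)^2 > 2$, i.e.\ $q - 1 > 2q^{1/2} \ge K(\chi_q)$, so $|T_{q+1}| > 0$ and $T_{q+1} \ne \emptyset$; since $q$ is a power of $2$, this is precisely the case $q > 4$. The only genuine obstacle is the pair $q = 2, 4$, for which the Weil bound no longer controls the sign of $q - 1 - K(\chi_q)$ (for $q = 4$ it is the borderline $2q^{1/2} = q$). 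I would settle these by direct inspection of the polynomials: over $\F_2$, $D_3(x) = x(x+1)^2$ has the single root $1$ in $\F_2^*$, which is its own inverse; over $\F_4$, $D_5(x) = x(x^2 + x + 1)^2$ has exactly the two primitive cube roots of unity in $\F_4^*$, and these are inverse to each other. In either case every root of $D_{q+1}$ in $\F_q^*$ has its inverse again a root, so $T_{q+1} = \emptyset$, as claimed.
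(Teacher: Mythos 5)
Your proof is correct. The identification of the full root set $R$ of $D_{q+1}$ in $\F_q^*$ with $\overline{\Omega}$ via Lemma \ref{lem_lac}, the count $|R|=q/2$, the resulting formula $|T_{q+1}|=(q-1-K(\chi_q))/4$, the Weil-bound estimates, and the explicit factorizations $D_3(x)=x(x+1)^2$ and $D_5(x)=x(x^2+x+1)^2$ all check out, and you correctly identify that $q=4$ is the one case the Weil bound genuinely cannot settle (for $q=2$ integrality alone would already force $|T_3|=0$, though your direct check is equally fine). This is essentially the route the paper itself indicates: the corollary is quoted from \cite{Blo} and presented as a consequence of Theorem \ref{thm_blo} together with $|K(\chi_q)|\leq 2q^{1/2}$, so your write-up amounts to filling in that deduction, the main supplements being the count $|R|=q/2$ and the small-field verification. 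It is worth noting that the paper's own contribution on this point is a \emph{different} proof of the two nonemptiness assertions, avoiding the Kloosterman sum entirely: $S_{q+1}\neq\emptyset$ follows from $S_{q+1}=L_{A_n}$ and the four $\F_2$-rational points of the Koblitz curve (Lemma \ref{lem_l_a_n}), while $T_{q+1}\neq\emptyset$ for $q>4$ follows from $T_{q+1}=L_{B_n}$ and the Hasse bound (Lemma \ref{lem_l_b_n}); that route trades the character-sum estimate for the elliptic-curve point count and the structure of the graph $G_q$, but, unlike yours, it does not by itself recover the negative statement for $q=2,4$.
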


Indeed, the fact that $S_{q+1} \not = \emptyset$ can be proved without computing $N_{q+1}$, just relying upon the Koblitz curve $E$ and on the graph $G_q$. We notice in passing that
\begin{equation*}
S_{q+1} = \{\theta(\gamma) : \gamma \in \F_{q^2}^{**}, |\gamma| \mid (q+1) \}.
\end{equation*} 
In the following we denote by $L_{A_n}$ and $L_{B_n}$ the sets of leaves of $G_q$ contained in $A_n$ and $B_n$ respectively.
\begin{lemma}\label{lem_l_a_n}
$L_{A_n}$ is not empty and $S_{q+1} = L_{A_n}$.
\end{lemma}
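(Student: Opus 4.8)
The plan is to show the two assertions separately, leaning on Corollary \ref{cor_lac}, Lemma \ref{iter_trace} and the structure theorem \ref{thm_gq}. First I would establish that $S_{q+1} = L_{A_n}$. The identity $S_{q+1} = \{\theta(\gamma) : \gamma \in \F_{q^2}^{**},\ |\gamma| \mid (q+1)\}$ has already been recorded, so by Lemma \ref{lem_lac} (applied with $t := n$) the set $S_{q+1}$ is precisely $\overline{\Omega} = \{x \in \F_q^* : \Tr_n(x^{-1}) = 1\}$. Thus every element of $S_{q+1}$ lies in $\F_q^*$ and satisfies $\Tr_n(x^{-1}) = 1$; in particular it cannot be written as $\theta(\tilde x)$ for any $\tilde x \in \F_q^*$ (otherwise $\Tr_n(x^{-1}) = 0$ by the $\Omega$-part of Lemma \ref{lem_lac_0}), so every element of $S_{q+1}$ is a leaf of $G_q$. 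Conversely, by Corollary \ref{cor_lac} every leaf $x \in \F_q^*$ of $G_q$ lies in $\overline{\Omega}$, hence in $S_{q+1}$. This gives $S_{q+1} = \{x \in \F_q^* : x \text{ is a leaf of } G_q\} = L_{A_n} \cup L_{B_n}$, so it remains only to see that no leaf of $G_q$ lies in $B_n$, i.e.\ $L_{B_n} = \emptyset$; then $S_{q+1} = L_{A_n}$.

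To rule out leaves in $B_n$, I would invoke Theorem \ref{thm_gq}: any $\theta$-periodic vertex in $B_n$ is the root of a binary tree of depth $1$, and the non-root vertex at level $1$ of such a tree is then its unique leaf. But a leaf $x$ of $G_q$ with $x \in B_n$ would, by Corollary \ref{cor_lac}, satisfy $\Tr_n(x) = 0$ and $\Tr_n(x^{-1}) = 1$; on the other hand $x = \theta(\tilde x)$ for the root $\tilde x$, and I would trace through the defining property of $B_n$-components together with the relation between $\Tr_n(x)$ and the inverse to reach a contradiction — the point being that a vertex on level $1$ of a depth-$1$ $B_n$-tree has a predecessor in $\F_{q^2}^* \setminus \F_q^*$, and such predecessors, after applying $\theta$, produce elements of $\overline\Omega$ with the wrong trace pattern relative to the $B_n$ constraint $\Tr_n(x) \neq \Tr_n(x^{-1})$. (Alternatively, and perhaps more cleanly: $x \in B_n$ being a leaf forces its unique predecessor $\tilde x$ to satisfy $\tilde x \notin \F_q$, but then $\tilde x \in \F_{q^2}^*$ has $|\tilde x| \mid (q+1)$ by Lemma \ref{lem_lac}, whence $\Tr_{2n}(\tilde x^{-1}) = 1$ and $\theta(\tilde x) = x$ lies in $A_n$ by the argument of Lemma \ref{iter_trace} — contradicting $x \in B_n$.)

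For non-emptiness of $L_{A_n}$: by the first part $L_{A_n} = S_{q+1}$, and $S_{q+1} = \{\theta(\gamma) : \gamma \in \F_{q^2}^{**},\ |\gamma| \mid (q+1)\}$ is non-empty because $q+1 > 1$ and the cyclic group $\F_{q^2}^*$ of order $q^2 - 1 = (q-1)(q+1)$ contains an element $\gamma$ of order exactly $q+1 > 2$; such a $\gamma$ lies in $\F_{q^2}^{**}$ (it is $\neq 0$, and $\neq 1$ since $|\gamma| > 1$), and $\theta(\gamma) = \gamma + \gamma^{-1} \in S_{q+1}$. This uses only that $q \geq 2$ so $q + 1 \geq 3$. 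Equivalently one may appeal directly to Lemma \ref{lem_lac_0}, whose set $\overline\Omega$ is visibly non-empty (it is a coset-complement of $\Omega$ in $\F_q^*$, and a trace-$1$ argument shows $|\overline\Omega| = 2^{n-1} > 0$).

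The main obstacle I anticipate is the clean verification that $L_{B_n} = \emptyset$: one must be careful to distinguish leaves of $B_n$-components (which exist as abstract vertices of depth-$1$ trees) from elements of $\F_q^*$ that are leaves of $G_q$ in the sense used here, and to confirm that the trace conditions from Corollary \ref{cor_lac} are genuinely incompatible with membership in $B_n$ once one accounts for the predecessor living in $\F_{q^2}^* \setminus \F_q^*$. The rest is bookkeeping with Lemma \ref{lem_lac} and Lemma \ref{lem_lac_0}.
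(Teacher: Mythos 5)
There is a genuine gap, and it is fatal to the structure of your argument. You reduce everything to the claim $L_{B_n}=\emptyset$, but that claim is false for $n>2$: the very next lemma of the paper (Lemma \ref{lem_l_b_n}) shows $L_{B_n}=T_{q+1}\neq\emptyset$ once $q>4$. The error originates in your first step, where you take the displayed identity $S_{q+1}=\{\theta(\gamma):\gamma\in\F_{q^2}^{**},\ |\gamma|\mid(q+1)\}$ at face value and conclude $S_{q+1}=\overline{\Omega}$. The right-hand side of that identity only encodes the single condition $D_{q+1}(\alpha)=0$, so it equals the full root set $S_{q+1}\cup T_{q+1}=\overline{\Omega}=L_{A_n}\cup L_{B_n}$; the defining feature of $S_{q+1}$ is the \emph{second} condition $D_{q+1}(\alpha^{-1})=0$, which by Lemma \ref{lem_lac} translates to $\Tr_n(\alpha)=1$ and is precisely what separates $L_{A_n}$ from $L_{B_n}$ in Corollary \ref{cor_lac}. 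Your ``cleaner alternative'' for ruling out $B_n$-leaves also fails: from $\tilde x\in\F_{q^2}^*$ with $|\tilde x|\mid(q+1)$ you may conclude $\theta(\tilde x)\in\overline{\Omega}$, i.e.\ $\Tr_n(x^{-1})=1$, but nothing forces $\Tr_n(x)=1$, and $\overline{\Omega}$ meets both $A_n$ and $B_n$; Lemma \ref{iter_trace} concerns leaves already known to lie in $A_t$ and cannot be invoked here. Your non-emptiness argument inherits the same defect: producing $\gamma$ of order $q+1$ only yields a root of $D_{q+1}$, i.e.\ an element of $L_{A_n}\cup L_{B_n}$, not an element of $S_{q+1}$.

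The repair is to use both trace conditions. For $\alpha\in S_{q+1}$, both $\alpha$ and $\alpha^{-1}$ lie in $\overline{\Omega}$, so $\Tr_n(\alpha)=\Tr_n(\alpha^{-1})=1$, whence $\alpha\in A_n$ and $\alpha$ is a leaf; conversely a leaf in $A_n$ has $\Tr_n(\alpha)=\Tr_n(\alpha^{-1})=1$ by Corollary \ref{cor_lac}, so both $\alpha$ and $\alpha^{-1}$ arise as $\theta(\beta)$ with $|\beta|\mid(q+1)$ by Lemma \ref{lem_lac}, giving $\alpha\in S_{q+1}$. (The paper's own proof of the forward inclusion is graph-theoretic instead: since $\theta(\alpha)=\theta(\alpha^{-1})$ and every vertex has in-degree at most $2$, having both $\alpha$ and $\alpha^{-1}$ as non-periodic leaves forces $\theta(\alpha)$ to be non-periodic, so the tree has depth at least $2$ and must lie in $A_n$ by Theorem \ref{thm_gq}.) Non-emptiness then comes from $A_n\cap\F_q^*\neq\emptyset$ (e.g.\ via $E(\F_2)\subseteq E(\F_q)$, or simply $1\in A_n$) together with the fact that every $A_n$-tree has depth $l+2\geq 2$ and hence has leaves in $\F_q^*$.
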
 
\begin{proof}
We notice that $E(\F_2) \subseteq E(\F_{2^n})$ for all positive integers $n$. Since 
\begin{equation*}
E(\F_2) = \{O, (0,1), (1,1), (1,0) \},
\end{equation*}
we have that $|E(\F_{2^n})| \geq 4$. Therefore $A_n$ is not empty and the same holds for $L_{A_n}$. 

If $\alpha \in S_{q+1}$, then there exist two elements $\beta, \gamma \in \F_{q^2}^{**}$ such that $|\beta| \mid (q+1)$ and $|\gamma| \mid (q+1)$ with $\theta(\beta) = \alpha$ and $\theta(\gamma) = \alpha^{-1}$. In particular, $\alpha$ and $\alpha^{-1}$ are leaves of $G_q$ and are not $\theta$-periodic. Moreover $\theta(\alpha)$ is not $\theta$-periodic too. In fact, if $\alpha =1$, then $\theta(1) = 0$, which belongs to the tree rooted in $\infty$. If $\alpha \not = 1$, then $\alpha \not = \alpha^{-1}$. If $\theta (\alpha)$ were $\theta$-periodic, then the in-degree of $\theta(\alpha)$ in $G_q$ would be greater than $2$, which is absurd. Hence $\alpha$ and $\alpha^{-1}$ belong to a tree of depth at least $2$. We conclude that $\alpha, \alpha^{-1} \in L_{A_n}$. 

If $\alpha \in L_{A_n}$, then $\alpha^{-1} \in L_{A_n}$ too. According to Lemma \ref{lem_lac}, there exist two elements $\beta$ and $\gamma$ in $\F_{q^2}^{**}$ such that $|\beta| \mid (q+1)$ and $|\gamma| \mid (q+1)$ with $\theta(\beta) = \alpha$ and $\theta(\gamma) = \alpha^{-1}$. Hence $\alpha \in S_{q+1}$.
\end{proof}

In Lemma \ref{lem_l_b_n} we prove that $T_{q+1} \not = \emptyset$.

 We  recall that, for any $x \in \Pro(\F_q) \backslash \{0, \infty \}$, there are two distinct points in $E(\F_{q^2})$ having the same $x$-coordinate, while $(0,1)$ is the only point with $x=0$. In fact, if $P := (x_P, y_P) \in E(\F_{q^2}) \backslash \{ \infty \}$, then $-P = (x_P, x_P + y_P)$ (see \cite[Section 2]{SU2}). Moreover, $-P$ is the only point having the same $x$-coordinate of $P$ and $P = - P$ if and only if $x_P=0$.

\begin{lemma}\label{lem_l_b_n}
$L_{B_n}$ is not empty and $T_{q+1} = L_{B_n}$, provided that $n > 2$.
\end{lemma}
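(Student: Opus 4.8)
The plan is to reduce the statement to the trace description of $\overline{\Omega}$ and then to count. The first step I would carry out is to record that, for $\alpha\in\F_q^*$, one has $D_{q+1}(\alpha)=0$ if and only if $\alpha\in\overline{\Omega}$: writing $\alpha=\gamma+\gamma^{-1}$ with $\gamma\in\F_{q^2}$ (always possible) and using $D_{q+1}(\gamma+\gamma^{-1})=\gamma^{-(q+1)}(\gamma^{q+1}+1)^2$, we see that $D_{q+1}(\alpha)=0$ iff $\gamma^{q+1}=1$ with $\gamma\neq1$, i.e. iff $\alpha\in\{\theta(\gamma):\gamma\in\F_{q^2}^{**},\ |\gamma|\mid(q+1)\}$, which is $\overline{\Omega}$ by Lemma \ref{lem_lac}. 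In particular $\overline{\Omega}$ is the disjoint union of $S_{q+1}$ and $T_{q+1}$. I would also observe, in the same vein, that $\overline{\Omega}$ is exactly the set of leaves of $G_q$ lying in $\F_q^*$: an element $\alpha\in\F_q^*$ has a $\theta$-preimage (which is then automatically in $\F_q^*$, since $\theta(0)=\theta(\infty)=\infty$) precisely when $X^2+\alpha X+1$ has a root in $\F_q$, that is when $\Tr_n(\alpha^{-1})=0$, i.e. when $\alpha\in\Omega$; moreover $0$ and $\infty$ are not leaves.

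Next I would prove $T_{q+1}=L_{B_n}$ by two inclusions, valid for every $n$. If $\alpha\in L_{B_n}$, then by Corollary \ref{cor_lac} we have $\Tr_n(\alpha)=0$ and $\Tr_n(\alpha^{-1})=1$; hence $\alpha\in\overline{\Omega}$, so $D_{q+1}(\alpha)=0$, whereas $\Tr_n((\alpha^{-1})^{-1})=\Tr_n(\alpha)=0$ gives $\alpha^{-1}\notin\overline{\Omega}$, so $D_{q+1}(\alpha^{-1})\neq0$; thus $\alpha\in T_{q+1}$. Conversely, if $\alpha\in T_{q+1}$, then $D_{q+1}(\alpha)=0$ forces $\alpha\in\overline{\Omega}$ (so $\Tr_n(\alpha^{-1})=1$) and $D_{q+1}(\alpha^{-1})\neq0$ forces $\alpha^{-1}\notin\overline{\Omega}$ (so $\Tr_n(\alpha)=0$); therefore $\Tr_n(\alpha)\neq\Tr_n(\alpha^{-1})$, i.e. $\alpha\in B_n$, and since $\alpha\in\overline{\Omega}$ it is a leaf of $G_q$ by the first step, so $\alpha\in L_{B_n}$.

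For non-emptiness when $n>2$ I would argue by cardinalities. Since $x\mapsto x^{-1}$ is a bijection of $\F_q^*$ and $\Tr_n(0)=0$, we get $|\overline{\Omega}|=|\{y\in\F_q:\Tr_n(y)=1\}|=2^{n-1}$. Combining the disjoint decomposition $\overline{\Omega}=S_{q+1}\sqcup T_{q+1}$ from the first step with $N_{q+1}=|S_{q+1}|$ and Theorem \ref{thm_blo} gives
\[
|L_{B_n}|=|T_{q+1}|=2^{n-1}-N_{q+1}=2^{n-1}-\frac{q+1+K(\chi_q)}{4}=\frac{2^n-1-K(\chi_q)}{4}.
\]
Then the Weil bound $|K(\chi_q)|\le 2q^{1/2}$ yields $4\,|L_{B_n}|\ge 2^n-1-2\cdot 2^{n/2}$, which is positive for every $n\ge3$, as is immediate to check (the case $n=3$ giving $7-4\sqrt2>0$, and the expression being at least $2^n-1-2^{1+n/2}>0$ for $n\ge4$). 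Hence $L_{B_n}\neq\emptyset$ for $n>2$. I would also remark that the hypothesis is sharp: for $n=2$ one has $|E(\F_4)|=8$, hence $|A_2|=|E(\F_4)|/2+1=5$ and $B_2=\emptyset$.

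The routine part dominates here; the only points requiring care are the identification of ``leaf of $G_q$ in $\F_q^*$'' with $\overline{\Omega}$ (so that the Kloosterman-sum count for $S_{q+1}=L_{A_n}$ transfers to its complement $L_{B_n}$) and the verification that the Weil estimate still gives a positive answer exactly at the threshold $n=3$, which is where it is tightest; for $n=2$ the statement genuinely fails.
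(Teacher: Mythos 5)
Your proof is correct, and the equality $T_{q+1}=L_{B_n}$ is established essentially as in the paper: both arguments reduce to the observation that $D_{q+1}(\alpha)=0$ exactly when $\alpha=\theta(\gamma)$ for some $\gamma\in\F_{q^2}^{**}$ with $|\gamma|\mid(q+1)$, i.e.\ when $\Tr_n(\alpha^{-1})=1$ (Lemmas \ref{lem_lac_0} and \ref{lem_lac}), and then match the two trace conditions against Corollary \ref{cor_lac}. Where you genuinely diverge is the non-emptiness of $L_{B_n}$. The paper bounds $|A_n\cap\F_q^*|=\frac{|E(\F_q)|-2}{2}$ from above by the Hasse bound and concludes $B_n\neq\emptyset$ (hence $L_{B_n}\neq\emptyset$, since every component in $B_n$ carries depth-$1$ trees), whereas you compute $|T_{q+1}|=|\overline{\Omega}|-N_{q+1}=\frac{q-1-K(\chi_q)}{4}$ exactly from Theorem \ref{thm_blo} and invoke the Weil bound for Kloosterman sums. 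The two estimates are equivalent in strength --- indeed $K(\chi_q)=|E(\F_q)|-q-1$ for this Koblitz curve, so the Weil bound for $K(\chi_q)$ \emph{is} the Hasse bound --- and your exact formula $|L_{B_n}|=\frac{q-1-K(\chi_q)}{4}$ is a pleasant by-product consistent with the graph structure ($|B_n|=\frac{q-1-K(\chi_q)}{2}$, half of which are leaves). The one thing your route gives up is the paper's stated aim of deriving these existence statements from the curve $E$ and the graph $G_q$ without going through the count $N_{q+1}$ of \cite{Blo}: Theorem \ref{thm_blo} already has, as its corollary, the assertion $T_{q+1}\neq\emptyset$ for $q>4$, so your non-emptiness argument is not logically circular but does lean on the very result the lemma is meant to reprove independently. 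Your closing remark that the bound is sharp at $n=2$ (where $|E(\F_4)|=8$ forces $B_2=\emptyset$) is correct and a worthwhile addition.
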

\begin{proof}
First we prove that $B_n \not = \emptyset$. Indeed,  
\begin{displaymath}
|A_n \cap \F_q^*| = \dfrac{|E(\F_q)|-2}{2}.
\end{displaymath} 
We recall that, according to Hasse bound,
\begin{displaymath}
|E(\F_q)| \leq q+1+2 \sqrt{q}.
\end{displaymath}
Therefore 
\begin{displaymath}
|A_n \cap \F_q^*| \leq \dfrac{q-1+2 \sqrt{q}}{2}.
\end{displaymath}
We have that 
\begin{displaymath}
\dfrac{q-1+2 \sqrt{q}}{2} < q-1 \Leftrightarrow 0 < q - 1 - 2 \sqrt{q}.
\end{displaymath}
This latter is true if $n > 2$. In fact, the real function $f(x) = x - 1 - 2 \sqrt{x}$  is strictly increasing for $x > 1$ and $f(8) > 0$.

Since $B_n \not = \emptyset$, also $L_{B_n} \not = \emptyset$.

If $\alpha \in T_{q+1}$, then there exists $\gamma \in \F_{q^2}^{**}$ such that $\theta(\gamma) = \alpha$ and $|\gamma| \mid (q+1)$ but there is no $\beta \in \F_{q^2}^{**}$ such that $\theta(\beta) = \alpha^{-1}$ and $|\beta| \mid (q+1)$. Therefore $\alpha^{-1} = \theta(\delta)$ for some $\delta \in \F_q^*$, according to Lemma \ref{lem_lac_0}. Hence $\Tr_n(\alpha) = 0$, while $\Tr_n(\alpha^{-1}) = 1$. Therefore $\alpha \in L_{B_n}$.

If $\alpha \in L_{B_n}$, then $\Tr_{n} (\alpha^{-1}) = 1$, while $\Tr_n(\alpha) = 0$. According to Lemma \ref{lem_lac} there exists $\beta \in \F_{q^2}^{**}$ such that $\theta(\beta) = \alpha$. Moreover, according to Lemma \ref{lem_lac_0}, there exists $\gamma \in \F_{q}^{*}$ such that $\theta (\gamma) = \alpha^{-1}$. Therefore $D_{q+1} (\alpha)  =0$, while $D_{q+1} (\alpha^{-1}) \not = 0$. Hence $\alpha \in T_{q+1}$ according to Lemma \ref{lem_lac_0}.
\end{proof}

\bibliography{Refs}
\end{document}